\newcommand\blfootnote[1]{%
  \begingroup
  \renewcommand\thefootnote{}\footnote{#1}%
  \addtocounter{footnote}{-1}%
  \endgroup
}
\newcounter{foo}
\theoremstyle{plain}
\newtheorem{conj}[foo]{Conjecture}
\newtheorem{thm}[foo]{Theorem}
\newtheorem{prop}[foo]{Proposition}
\newtheorem{lem}[foo]{Lemma}
\theoremstyle{definition}
\newtheorem{defn}[foo]{Definition}
\theoremstyle{remark}
\newtheorem{rem}[foo]{Remark}
\newcommand{\p}{\partial}
\newcommand{\bp}{\overline{\partial}}
\newcommand{\Ric}{\text{Ric}}
\newcommand{\Rm}{\text{Rm}}
\newcommand{\RR}{\mathbb{R}}
\numberwithin{foo}{section}
\numberwithin{equation}{section}
\title{Independence of Singularity Type for Numerically Effective K\"ahler-Ricci Flows}
\author{Hosea Wondo and Zhou Zhang}
\begin{document}
\date{\today}

\maketitle

\begin{abstract}
    In this paper, we show that the singularity type of solutions to the K\"aher-Ricci flow on a numerically effective manifold does not depend on the initial metric. More precisely, if there exists a type III solution to the K\"ahler-Ricci flow, then any other solution starting from a different initial metric will also be Type III. This generalises a previous result by Y. Zhang for the semi-ample case and confirms a conjecture by V. Tosatti.  
\end{abstract}

\section{Introduction}

\blfootnote{2020 Mathematics Subject Classification. Primary 53C55; Secondary 32Q15.}

The K\"ahler-Ricci flow is a geometric evolution equation used to study K\"ahler geometry. It has proven to be a powerful tool in constructing canonical metrics such as K\"ahler-Einstein metrics \cites{hdC86,PSS06,MS09,PS06,T12}. 

A primary research direction is the study of the analytic minimal model program in birational geometry \cite{M02}, where the flow takes a central role in producing minimal models. Initiated by Song and Tian in \cite{ST06}, the program proposes to emulate the procedure carried out by Hamilton and Perelmann in resolving the Poincare conjecture. The idea is to run the flow on algebraic manifolds and perform `algebraic surgeries' such as flips and contractions each time the flow encounters a singularity \cites{SW13,SW14,SY12}. After finite iterations of this procedure, the resulting manifold would be numerically effective (nef) and hence a minimal model. At this stage, the normalised version of the flow will evolve the metric to a K\"ahler-Einstein metric or a generalised K\"ahler-Einstein metric. Significant progress has been made in this direction; see \cites{ST12,ST16,ST17,T19,GSW16,FzZ15,ysZ19,TWY18,TzlZ16}. 

Let $X$ be a compact K\"ahler manifold with numerically effective line bundle, which corresponds to the final step of the analytic minimal model program. A family on K\"ahler metrics $\omega(t)$ is said to evolve by the normalised K\"ahler-Ricci flow if   
\begin{equation}\label{krf}
\left\lbrace \begin{aligned}
    \pdv{}{t}\omega(t)  &= - \Ric(\omega(t))- \omega(t), \\
    \omega(0) &= \omega_0.
\end{aligned} \right.
\end{equation}

Since the flow is weakly parabolic, we have access to interior estimates, which smooths the metric for short times. In the long run, non-linearities make the metric non-positive, forming a singularity. The precise time of singularity formation is determined by the class of the evolving metric in the cohomology given by 
\begin{equation}
H_{\bar{\partial}}^{1,1}(M, \mathbb{R})=\frac{\{\bar{\partial} \text {-closed real }(1,1) \text {-forms }\}}{\{\bar{\partial} \text {-exact real }(1,1) \text {-forms }\}}.
\end{equation}
The singular time, $T$, is characterised by Tian and the second author in \cite{TzZ06}: 
\begin{equation}\label{maxT}
T:=\sup \left\{t \mid  [\omega(t)] \text{ is K\"ahler}\right\}.
\end{equation}
By formally taking the cohomology class of \eqref{krf} and solving the resulting ODE, we obtain solution
\begin{equation}
    [\omega(t)] = e^{-t}[\omega_0] - (1-e^{-t})c_1(X). 
\end{equation}
Since $X$ is numerically effective, $-c_1(X)$ lies in the closure of the K\"ahler cone, the set of classes with a K\"ahler representative, and thus, the solution exists for all time. 
 Furthermore, for any initial metric, the cohomology class converges to the first Chern class, $-c_1(X)$ in the limit as $t \rightarrow \infty$. This is independent of any starting metric $\omega_0$. \\

The singularity formation reveals the manifold's underlying topology; therefore, the curvature blow-up should be independent of the starting metric. To describe the curvature blow-up, we utilise the following terminology traditionally used in parabolic geometric flows, such as the Ricci flow and Mean Curvature flow. 
\begin{defn}
We say that a long-time solution to the K\"ahler-Ricci flow develops a type III singularity if 
\begin{equation}
\sup _{X \times[0, \infty)} |\Rm(\omega(t))|_{\omega(t)}<+\infty,
\end{equation}
and a type IIb singularity if 
\begin{equation}
\sup _{X \times[0, \infty)} |\Rm(\omega(t))|_{\omega(t)}=+\infty .
\end{equation}
\end{defn}

The precise conjecture for metric independence is stated by  Tossati in \cite{T15Kawa}. \\

\begin{conj}[Tosatti \cite{T15Kawa}]\label{conj}
Let $X$ be a compact K\"ahler manifold with $K_X$ nef, so every solution of the K\"ahler-Ricci flow exists for all positive time. Then the singularity type at infinity does not depend on the choice of the initial metric $\omega_0$.
\end{conj}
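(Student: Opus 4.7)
The plan is to couple the two solutions through a single time-dependent relative K\"ahler potential, and then use parabolic regularity to transfer curvature control from one to the other. Let $\omega(t)$ denote the assumed Type III solution starting at $\omega_0$, and let $\omega'(t)$ be any other solution starting at $\omega_0'$. The cohomological identity $[\omega'(t)]-[\omega(t)] = e^{-t}\bigl([\omega_0']-[\omega_0]\bigr)$, together with the $\p\bp$-lemma, provides a smooth function $v\colon X\times[0,\infty)\to \RR$ with $v(\cdot,0)=0$ such that
\[
\omega'(t) = \omega(t) + e^{-t}\chi + \ik\,\p\bp v, \qquad \chi := \omega_0' - \omega_0.
\]
Differentiating in $t$, inserting the flow equations, and using $\Ric(\omega')-\Ric(\omega) = -\ik\,\p\bp\log\!\bigl(\omega'^{\,n}/\omega^n\bigr)$ then gives (after absorbing a time-dependent additive constant) the parabolic complex Monge--Amp\`ere equation
\[
\p_t v + v = \log\frac{\bigl(\omega(t) + e^{-t}\chi + \ik\,\p\bp v\bigr)^n}{\omega(t)^n}.
\]
Proving the conjecture reduces to obtaining uniform higher-order bounds on $v$ relative to $\omega(t)$.

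I would then execute the standard $C^0\to C^2\to C^{2+\alpha}\to C^k$ program. For the $C^0$ estimate, at a spatial maximum of $v(\cdot,t)$ we have $\ik\,\p\bp v\le 0$, so concavity of $\log\det$ controls the right-hand side by the eigenvalues of $e^{-t}\chi$ with respect to $\omega(t)$; provided these eigenvalues remain bounded, an ODE comparison of the form $M'(t)+M(t)\le Ce^{-t}$, with $M(t):=\max_X v(\cdot,t)$, and a symmetric bound for the minimum, yield $|v|\le Cte^{-t}$. For $C^2$, a parabolic Aubin--Yau/Chern--Lu calculation, in which the Type III hypothesis $|\Rm(\omega(t))|_{\omega(t)}\le C$ absorbs the bisectional curvature contributions, should give a uniform equivalence $C^{-1}\omega(t)\le \omega'(t)\le C\omega(t)$. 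Shi-type derivative estimates for $\omega(t)$, combined with parabolic Evans--Krylov and Schauder theory in charts where $\omega(t)$ has bounded local geometry, then upgrade to uniform $C^k$ bounds on $v$; a direct expansion of $\Rm(\omega'(t))$ in these bounds then yields Type III for $\omega'(t)$.

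The main obstacle will be the $C^0$ estimate in the collapsing regime. When $K_X$ is nef but $(-c_1(X))^n = 0$, the limiting class has vanishing self-intersection and $\omega(t)$ may collapse along certain directions, so that the eigenvalues of $e^{-t}\chi$ with respect to $\omega(t)$ need not stay bounded and the naive maximum-principle estimate above breaks down. In Y. Zhang's semi-ample case this is bypassed by decomposing $\chi$ according to the base--fibre structure of the semi-ample fibration and handling the two components separately. In the general nef setting no such geometric decomposition is available a priori; the crux of the argument will be to replace this fibration input with a purely PDE-theoretic substitute---for example a carefully chosen barrier function, an auxiliary K\"ahler reference metric adapted to the nef class $-c_1(X)$, or an iteration scheme that leverages the Type III hypothesis itself---strong enough to push the maximum-principle comparison through even when collapsing is present.
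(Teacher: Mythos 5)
You have correctly set up the Monge--Amp\`ere reduction and, importantly, you have correctly diagnosed where the naive maximum-principle argument fails: since $\chi = \omega_0' - \omega_0$ is a general real $(1,1)$-form with no sign, the quantity $e^{-t}\chi$ measured against the collapsing metric $\omega(t)$ need not stay bounded, so the ODE comparison $M'+M\le Ce^{-t}$ does not close. What is missing is the actual mechanism the paper uses to remove this obstruction, and it is quite specific --- none of the candidate fixes you list (barrier functions, auxiliary nef representatives, iteration) is what is needed.

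The paper's resolution is a two-step squeeze. First, a scaling lemma (Lemma~\ref{2L}) shows that if $\widetilde{\omega}(t)$ is Type III and $\omega(0) = \lambda_0\widetilde{\omega}(0)$ for any $\lambda_0>0$, then the resulting flow $\omega(t)$ is again Type III and uniformly equivalent to $\widetilde{\omega}(t)$. The key observation there is that the reduced Monge--Amp\`ere equation has a spatially constant potential, so $\omega(t) = \lambda(t)\,\widetilde{\omega}(\tau(t))$ is an explicit space- and time-rescaling of $\widetilde{\omega}$; the metric and curvature equivalence then follows from the curvature bound on $\widetilde{\omega}$ alone, with no fibration or decomposition of $\chi$ required. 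Second, one chooses $\lambda_0^-<1<\lambda_0^+$ so that $\omega_0^- := \lambda_0^-\widetilde{\omega}_0 \le \omega_0 \le \lambda_0^+\widetilde{\omega}_0 =: \omega_0^+$ and compares $\omega(t)$ not to $\widetilde{\omega}(t)$ directly but to the two Type III reference flows $\omega^\pm(t)$. Writing $u = \varphi - \varphi^-$, $v = \varphi - \varphi^+$, $\psi = \varphi^+ - \varphi^-$, the resulting Monge--Amp\`ere equations now contain the forms $\omega_0 - \omega_0^- \ge 0$ and $\omega_0 - \omega_0^+ \le 0$, which are \emph{sign-definite by construction}. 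This is precisely what makes the $C^0$ maximum principle close uniformly even in the collapsing regime: at a minimum of $u$ one has $\omega^-(t) + e^{-t}(\omega_0-\omega_0^-) + \ik\,\p\bp u \ge \omega^-(t)$ so the log term is $\ge 0$; at a maximum of $v$ the reversed inequality drops the log term; and $\psi$ is bounded because $\omega^\pm$ are explicit rescalings of the same $\widetilde{\omega}$. The eigenvalues of $e^{-t}\chi$ against $\omega(t)$ never need to be controlled.

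So the gap in your proposal is genuine and lies exactly where you flagged it, but the missing idea is not a more clever barrier or a new geometric decomposition --- it is the pair of scaled comparison flows $\omega^\pm(t)$ supplied by Lemma~\ref{2L}, which trades an uncontrollable sign-indefinite perturbation $\chi$ for two sign-definite ones. Once metric equivalence is obtained this way, your proposed $C^2\to C^{2+\alpha}\to C^k$ bootstrap matches the paper's Proposition~\ref{2propcurvbdd} in spirit (trace inequality, evolution of $S=|\Gamma-\widetilde{\Gamma}|^2_\omega$, Shi-type bounds on $\nabla\Rm(\widetilde{\omega})$, maximum principle), so that part of your outline is essentially correct.
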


In the semi-ample case, Tossati-YG. Zhang in \cite{TWY18} gives an almost complete classification of singularity type based on the topology of $X$. More precisely, they show the following.
\begin{thm}[Tosatti-YG. Zhang \cite{TygZ15}]\label{T0}
Let X be a compact K\"ahler manifold with semi-ample $K_X$ and consider a solution of the K\"ahler-Ricci flow.
\begin{enumerate}
    \item Suppose $\operatorname{kod}(X)=0$.
    \begin{itemize}
        \item If $X$ is a finite quotient of a torus, then the solution is of type III.
        \item If $K_X$ is not a finite quotient of a torus, then the solution is of type IIb.
    \end{itemize}
    \item  Suppose $\operatorname{kod}(X)=n$.
    \begin{itemize}
        \item If $K_X$ is ample, then the solution is type III.
        \item If $K_X$ is not ample, then the solution is type IIb.
    \end{itemize}
    \item  Suppose $0<\operatorname{kod}(X)<n$.
    \begin{itemize}
        \item If $X_y$ is not a finite quotient of a torus, then the solution is of type IIb.
        \item If $X_y$ is a finite quotient of a torus and $V=\emptyset$, then the solution is of type III.
    \end{itemize}
    \item Suppose $n=2$ and $\operatorname{kod}(X)=1$, then the solution is of type III if and only if the only singular fibers on $f$ are of type $m I_0, m>1$.
\end{enumerate}
\end{thm}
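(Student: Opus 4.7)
The plan is to prove the classification case-by-case, keyed on the Kodaira dimension, by exploiting the Iitaka fibration $f\colon X \to X_{\mathrm{can}}$ provided by the semi-ampleness of $K_X$. The guiding principle in each case is that along \eqref{krf} the cohomology class $[\omega(t)]$ converges to $-c_1(X)$, so the metric collapses asymptotically on fibers of $f$, and the curvature behavior is then read off from this collapse combined with the scaling identity $|\Rm(\lambda\omega)|_{\lambda\omega} = \lambda^{-1}|\Rm(\omega)|_\omega$ for $\lambda > 0$.

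\textbf{Cases (1) and (2).} For $\operatorname{kod}(X)=0$, $K_X$ is torsion, so $c_1(X)=0$ and $[\omega(t)]=e^{-t}[\omega_0]$ shrinks to zero while $\tilde\omega(t):=e^t\omega(t)$ stays in a fixed class and converges, by Cao's theorem, to the unique Ricci-flat representative $\omega_{CY}$. Since $|\Rm(\omega(t))|_{\omega(t)}=e^t|\Rm(\tilde\omega(t))|_{\tilde\omega(t)}$, boundedness of $|\Rm|$ forces $|\Rm(\omega_{CY})|\equiv 0$, which by the Beauville-Bogomolov decomposition characterizes exactly the finite quotients of tori; in that case one checks $|\Rm(\tilde\omega(t))|=O(e^{-t})$ via the exponential convergence rate in Cao's theorem, recovering type III. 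For $\operatorname{kod}(X)=n$ and $K_X$ ample, Cao's theorem similarly gives smooth convergence of $\omega(t)$ to the K\"ahler-Einstein metric, so $|\Rm|$ is uniformly bounded. When $K_X$ is semi-ample and big but not ample, $f$ contracts a proper exceptional subvariety $E$, and the limiting class $-c_1(X)$ degenerates along $E$; by the curvature blow-up estimates for contracting K\"ahler-Ricci flows (Song-Weinkove, Tian-Zhang), the flow must be type IIb.

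\textbf{Case (3).} For $0<\operatorname{kod}(X)<n$ the map $f\colon X\to Y$ has positive relative dimension with generic fiber $X_y$ a smooth Calabi-Yau manifold of dimension $n-\operatorname{kod}(X)$. The flow collapses the fibers of $f$, and restricted to a smooth fiber the situation reduces to Case (1): the rescaled fiber metric converges to a Ricci-flat metric on $X_y$, so if a generic $X_y$ is not a torus quotient then applying the Case (1) scaling argument along that fiber produces type IIb. If instead $X_y$ is a torus quotient and the discriminant locus $V$ is empty, then $f$ is a smooth Calabi-Yau submersion, and the parabolic estimates of Song-Tian together with subsequent refinements of Fong-Zhang and Tosatti-Weinkove-Yang for smooth Calabi-Yau fibrations furnish uniform bounds on $|\Rm(\omega(t))|_{\omega(t)}$, giving type III.

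\textbf{Case (4).} The remaining case is an elliptic surface $f\colon X\to Y$ over a curve, and the statement ties the type directly to the Kodaira combinatorics of the singular fibers. Multiple smooth fibers $m\mathrm{I}_0$ have elliptic (hence flat) underlying curve with only monodromy in the base direction, so the rescaled fiber metric there converges to a flat model and curvature stays bounded; every other Kodaira type ($\mathrm{I}_b$, $\mathrm{I}_b^*$, $\mathrm{II}$--$\mathrm{IV}^*$) involves nodal or cuspidal degeneration of the elliptic curve. The strategy is to reduce to the generalized K\"ahler-Einstein equation on $Y$ (Song-Tian), pass to local semi-flat / Ooguri-Vafa / Gross-Wilson model metrics near each type of singular fiber, and show that $|\Rm|$ must blow up near any non-$m\mathrm{I}_0$ fiber via a contradiction: if $|\Rm|$ remained bounded while the collapse occurred, a pointed Cheeger-Gromov subsequential limit would yield a smooth torus-like local model, incompatible with the non-$m\mathrm{I}_0$ combinatorics. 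The main obstacle, and the technical heart of the theorem, is precisely this case-by-case local analysis at each Kodaira type, combining global collapsing estimates of Song-Tian with the fine local geometry developed by Gross-Tosatti-Zhang.
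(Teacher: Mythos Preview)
The paper does not prove Theorem~\ref{T0}. It is quoted verbatim as a result of Tosatti--Weinkove--Yang \cite{TWY18}, and the paper's own contribution is the independent-of-initial-metric result Theorem~\ref{T1}. The only places where the paper touches the content of Theorem~\ref{T0} are the applications in Section~4, and there only fragments are re-derived: Section~4.1 handles item~(1) by exhibiting the explicit solution $\omega(t)=e^{-t}\omega_{CY}$, observing $|\Rm(\omega(t))|_{\omega(t)}=e^{t}|\Rm(\omega_{CY})|_{\omega_{CY}}$, and then invoking Theorem~\ref{T1} to pass to an arbitrary initial metric; Section~4.2 treats only the product case $X=B\times Y$ as a special instance of item~(3). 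Items~(2) and~(4), and the general fibration case of~(3), are not argued in the paper at all.

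So there is little to compare your proposal against. For item~(1) your argument and the paper's Section~4.1 are essentially the same scaling computation, though you route the type~III direction through the exponential convergence rate in Cao's theorem, while the paper simply uses the explicit solution together with Theorem~\ref{T1}. For the rest of Theorem~\ref{T0} your outline is a plausible high-level summary of the strategy in \cite{TWY18}, but it remains a sketch: the type~IIb direction in item~(2) via ``curvature blow-up estimates for contracting flows'' and especially the case-by-case local analysis in item~(4) are exactly the places where the real work lies, and your proposal only names the ingredients rather than supplying the arguments. None of this, however, is something the present paper claims to do.
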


An alternative proof to items (1), (2) and a special case of (3) was proven by Y. Zhang in \cite{ysZ20}, where he deduces these results by setting up a comparison of some well-behaved metric and an arbitrary metric. Moreover, he showed that for semiample $K_X$, the singularity type does not depend on the initial metric, thereby partially confirming Conjecture \ref{conj}.   Furthermore, Fong and Y. Zhang in \cite{FysZ20} showed that the set of singular fibers of the semi-ample fibration on which the Riemann curvature blows up at time-infinity is independent of the choice of the initial K\"ahler metric. For more results pertaining to the curvature behaviour of semiample K\"ahler-Ricci flows, see \cites{zZ09,ST16,J20}.

In this paper, we prove Conjecture \ref{conj} without assuming $K_X$ is semiample. More precisely, we show that if a solution develops a type III singularity on a numerically effective manifold, then every other solution develops a type III singularity. Consequently, if a solution develops a type IIb singularity, then any other solution must be type IIb. 

\begin{thm}\label{T1}
Let $\widetilde{\omega}(t)$ be a solution to the normalised K\"ahler-Ricci flow \eqref{krf} on a manifold $X$ with numerically effective $K_X$. 
\begin{equation}
    |\text{{\normalfont Rm}}(\widetilde{\omega}(t))|^2_{\widetilde{\omega}(t)}\leqslant C_0. 
\end{equation}
Then for any other solution to the K\"ahler-Ricci flow, $\omega(t)$, there exists a constant $C>0$ such that 
\begin{equation}\label{T1equiv}
    C^{-1} \widetilde{\omega}(t) \leqslant \omega(t) \leqslant C \widetilde{\omega}(t),
\end{equation}
for all $t \geqslant 0$. 
\begin{equation}\label{ThmRmbdd}
    |\text{{\normalfont Rm}}  (\omega(t))|^2_{\omega(t)}\leqslant C. 
\end{equation}
\end{thm}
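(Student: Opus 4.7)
\emph{Strategy.} The proof rests on a parabolic Schwarz-lemma (Chern-Lu type) comparison between the two K\"ahler-Ricci flow solutions, with the bounded Riemann curvature of $\widetilde\omega$ serving as the key input. One side of the metric equivalence is obtained via the parabolic maximum principle; the reverse follows from the Monge-Amp\`ere equation controlling the volume form ratio. The curvature bound for $\omega$ is then a consequence of the two-sided metric equivalence via standard higher-order parabolic estimates for the K\"ahler-Ricci flow.

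\emph{Setup.} Choose a smooth representative $\chi$ of $-c_1(X)$ and a smooth volume form $\Omega$ satisfying $-\Ric(\Omega)=\chi$. Define the reference forms $\hat\omega(t)=e^{-t}\omega_0+(1-e^{-t})\chi$ and $\hat{\widetilde\omega}(t)=e^{-t}\widetilde\omega_0+(1-e^{-t})\chi$, and write $\omega=\hat\omega+\sqrt{-1}\,\p\bp\varphi$, $\widetilde\omega=\hat{\widetilde\omega}+\sqrt{-1}\,\p\bp\widetilde\varphi$. The flows become parabolic complex Monge-Amp\`ere equations, and known Tian-Zhang type estimates on nef manifolds furnish uniform $L^\infty$ bounds on $\varphi$, $\widetilde\varphi$, $\dot\varphi$, $\dot{\widetilde\varphi}$ for all $t\geq 0$. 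The reference-form mismatch $\hat\omega-\hat{\widetilde\omega}=e^{-t}(\omega_0-\widetilde\omega_0)$ is smooth and decays exponentially in every $C^k$ norm.

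\emph{Metric equivalence.} The Type III hypothesis on $\widetilde\omega$ yields bounded holomorphic bisectional curvature. Applying the parabolic Chern-Lu inequality to $H=\log\operatorname{tr}_\omega\widetilde\omega$ (combining the elliptic Chern-Lu with the flow evolutions of both metrics) produces an estimate of the form
\begin{equation*}
\left(\frac{\p}{\p t}-\Delta_\omega\right)H\leq C_0\operatorname{tr}_\omega\widetilde\omega+C_1.
\end{equation*}
Consider the barrier $Q=H+A(\widetilde\varphi-\varphi)$ with $A\geq C_0+1$. The identities
\begin{equation*}
\left(\frac{\p}{\p t}-\Delta_\omega\right)\widetilde\varphi=\dot{\widetilde\varphi}-\operatorname{tr}_\omega\widetilde\omega+\operatorname{tr}_\omega\hat{\widetilde\omega},\qquad \left(\frac{\p}{\p t}-\Delta_\omega\right)\varphi=\dot\varphi-n+\operatorname{tr}_\omega\hat\omega,
\end{equation*}
generate a $-A\operatorname{tr}_\omega\widetilde\omega$ contribution that absorbs the bad $C_0\operatorname{tr}_\omega\widetilde\omega$ term. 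The residual mismatch $A\operatorname{tr}_\omega(\hat{\widetilde\omega}-\hat\omega)=-Ae^{-t}\operatorname{tr}_\omega(\omega_0-\widetilde\omega_0)$ is handled by an auxiliary correction in the barrier exploiting the $e^{-t}$ decay together with smooth a priori bounds on $\omega_0-\widetilde\omega_0$ relative to a K\"ahler reference metric. The parabolic maximum principle then yields $\operatorname{tr}_\omega\widetilde\omega\leq C$, equivalently $\widetilde\omega\leq C\omega$. For the reverse inequality, the Monge-Amp\`ere identities give
\begin{equation*}
\omega^n/\widetilde\omega^n=\exp\bigl((\dot\varphi-\dot{\widetilde\varphi})+(\varphi-\widetilde\varphi)\bigr),
\end{equation*}
uniformly bounded above and below. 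Letting $\lambda_1,\dots,\lambda_n>0$ denote the eigenvalues of $\omega$ relative to $\widetilde\omega$, the one-sided bound gives $\sum\lambda_i^{-1}\leq C$, hence $\lambda_i\geq 1/C$; then $\prod\lambda_i\leq C''$ forces $\lambda_i\leq C'''$, so $\omega\leq C'\widetilde\omega$.

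\emph{Curvature bound and main obstacle.} With $C^{-1}\widetilde\omega\leq\omega\leq C\widetilde\omega$ in hand, the parabolic Monge-Amp\`ere equation for $\varphi$ is uniformly parabolic; standard Calabi-type $C^3$ estimates followed by Evans-Krylov/Schauder bootstrapping (or Shi-type local curvature estimates for the K\"ahler-Ricci flow leveraging the bounded $\widetilde\omega$-geometry) give $|\Rm(\omega)|^2_\omega\leq C$. The chief technical obstacle lies in the Chern-Lu barrier step: the residual term $-Ae^{-t}\operatorname{tr}_\omega(\omega_0-\widetilde\omega_0)$ arises precisely because the initial K\"ahler classes $[\omega_0]\neq[\widetilde\omega_0]$ differ, and $\omega_0-\widetilde\omega_0$ is not $\p\bp$-exact. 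In the semi-ample case of Y. Zhang, the Iitaka fibration furnishes canonical K\"ahler representatives of the limit class; here, working only with the intrinsic bounded curvature of $\widetilde\omega$, closing the maximum-principle argument uniformly for all $t\geq 0$ is the delicate point.
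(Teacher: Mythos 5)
Your proposal correctly identifies the high-level structure (parabolic Schwarz lemma for one direction, volume ratio for the other, then Calabi-type higher-order estimates), and it correctly flags the residual term $-Ae^{-t}\tr_\omega(\omega_0-\widetilde\omega_0)$ as the delicate point --- but it does not actually resolve it, and that is precisely where the new idea of the paper lives. Two concrete problems.

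First, the claim that Tian--Zhang-type estimates furnish uniform $L^\infty$ bounds on $\varphi$, $\widetilde\varphi$, $\dot\varphi$, $\dot{\widetilde\varphi}$ for all $t\geq 0$ is false in the general nef setting. When $K_X$ is nef but not semi-ample, $-c_1(X)$ need not admit a nonnegative representative, the volume $\omega(t)^n$ may degenerate, and uniform lower bounds on $\varphi$ and $\dot\varphi$ are open (this is exactly the obstruction to extending Y.\ Zhang's argument verbatim). What can be bounded are \emph{differences} of potentials between two suitably related solutions --- not the individual potentials.

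Second, and more fundamentally, the ``auxiliary correction exploiting the $e^{-t}$ decay'' does not close the maximum principle. The form $\omega_0-\widetilde\omega_0$ has no definite sign, so $\tr_\omega(\omega_0-\widetilde\omega_0)$ has no definite sign, and $e^{-t}$ decay does not dominate it: from $|\Ric(\widetilde\omega)|\leq C_0$ one only gets $\widetilde\omega_0\leq e^{(C_0+1)t}\widetilde\omega(t)$, so $e^{-t}\tr_\omega\widetilde\omega_0$ is in general only controlled by $e^{C_0 t}\tr_\omega\widetilde\omega(t)$, which cannot be absorbed by the $-A\tr_\omega\widetilde\omega$ term for any fixed $A$. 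The paper's mechanism for killing this term is the missing ingredient: Lemma~\ref{2L} shows that for any $\lambda_0>0$, the solution from $\lambda_0\widetilde\omega_0$ is again Type III and uniformly equivalent to $\widetilde\omega$, because in that special case the Monge--Amp\`ere equation degenerates to an ODE and $\omega(t)=\lambda(t)\widetilde\omega(\tau(t))$ exactly. One then chooses $\lambda_0^\pm$ so that $\omega_0^-:=\lambda_0^-\widetilde\omega_0\leq\omega_0\leq\lambda_0^+\widetilde\omega_0=:\omega_0^+$, and works with the Monge--Amp\`ere equations for $u=\varphi-\varphi^-$, $v=\varphi-\varphi^+$, $\psi=\varphi^+-\varphi^-$. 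Since $\omega_0-\omega_0^->0$ and $\omega_0-\omega_0^+<0$, the residual trace terms $e^{-t}\tr_\omega(\omega_0-\omega_0^\pm)$ have a \emph{definite sign} and can simply be dropped in each maximum-principle inequality, and $\psi$ is bounded because $\omega^+,\omega^-$ are uniformly equivalent by Lemma~\ref{2L}. Without this sandwich construction, neither the bound on the potential difference nor the Chern--Lu barrier goes through.
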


\begin{rem}
   Any solution to the K\"ahler-Ricci flow on a numerically effective manifold exists for all time. Indeed, the estimate \eqref{ThmRmbdd} is impossible for finite time singularity due to Z. Zhang's result \cite{zZ10}.
\end{rem}

The proof builds on the results in \cite{ysZ20}, but the major difference is that no assumptions on the semi-ampleness of $X$ need to be made. 



We end the introduction by outlining the structure of this paper. In Section 2, we show a special case of Theorem \ref{T1} when the two initial metrics $\omega_0$ and $\widetilde{\omega}_0$ are related by a simple scaling factor $\lambda_0>0$. We first derive a Monge–Ampère equation where the potential function relates the two evolving metrics under rescaling of space and time. By deriving bounds on the potential function and because of the curvature assumption on $\widetilde{\omega}$, we derive metric equivalence between the known Type III metric $\widetilde{\omega}$ and the arbitrary metric $\omega$. Curvature estimates are then obtained as done in \cite{ysZ20}. In Section 3, we utilise Lemma \ref{2L} to prove Theorem \ref{T1}. Lemma \ref{2L} plays a vital role in constructing ``comparison" solutions to the arbitrary solution $\omega(t)$, which will be needed to obtain a favourable sign when carrying out maximum principle arguments. As a result, a potential function bound is derived and used to obtain metric equivalence. With a similar argument as carried out in Section 2, curvature bounds are obtained for the general theorem. Finally, in Section 4, we utilise Theorem \ref{T1} for examples from \cite{ysZ20} in the numerically effective case. \\

\textbf{Acknowledgement}: 
The authors would like to thank Valentino Tosatti, Feng Wang,  Yashan Zhang, and Zhenlei Zhang for their interest and feedback on this work. They would also like to thank the School of Mathematics and Statistics at The University of Sydney for providing a great study and research environment.

\section{Preliminary Results}
In the first part of this section, we restrict ourselves to the special case of Theorems \ref{T1}, which will be utilised in the proof of the general theorem. More precisely, we show that if there exists a solution to the K\"ahler-Ricci Flow on a numerically effective manifold $X$ with uniformly bounded curvature, then any scaling of the initial metric produces a solution that also evolves with uniformly bounded curvature. \\

In the second part of this section, we recall the proof in \cite{ysZ19}, which allows us to derive a bound on the curvature of a flow $\omega(t)$, given that it is equivalent to a Type III flow $\widetilde{\omega}(t)$.

\subsection{Flow from a Scaled Initial Metric}
We show the following. 
\begin{lem}\label{2L}
Let $X$ be a numerically effective K\"ahler manifolds such that there exists  $\widetilde{\omega}(t)$ a solution to the K\"ahler-Ricci flow with bounded curvature tensor;
\begin{equation}\label{2L2As}
    |\Rm(\widetilde{\omega}(t))|_{\widetilde{\omega}(t)}^2 \leqslant C_0.
\end{equation}
Then for any $\lambda_0>0$, there exists $C>0$ such that the solution $\omega(t)$ starting from $\omega(0) = \lambda_0 \widetilde{\omega}(0)$ satisfies 
\begin{equation}\label{2L2eq}
    C^{-1} \widetilde{\omega}(t) \leqslant \omega(t) \leqslant C \widetilde{\omega} (t),
\end{equation}
\begin{equation}\label{2L2eqCurv}
|\Rm(\omega(t))|_{\omega(t)}^2 \leqslant C,
\end{equation}
for all $t \geqslant 0$.
\end{lem}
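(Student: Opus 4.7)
The plan is to match the two solutions cohomologically via a joint rescaling of space and time, derive a parabolic Monge--Amp\`ere equation for the comparison potential, and then run the standard maximum-principle arsenal, exploiting the bounded curvature hypothesis on $\widetilde{\omega}$.

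First I would look for functions $\lambda(t)>0$ and $s(t)$ such that the cohomology classes $[\omega(t)]$ and $\lambda(t)[\widetilde{\omega}(s(t))]$ agree. Using $[\omega(t)] = \lambda_0 e^{-t}[\widetilde{\omega}_0] - (1-e^{-t})c_1(X)$ and $[\widetilde{\omega}(s)] = e^{-s}[\widetilde{\omega}_0] - (1-e^{-s})c_1(X)$, matching forces
\[\lambda(t) = 1+(\lambda_0-1)e^{-t}, \qquad e^{s(t)} = \frac{e^t+\lambda_0-1}{\lambda_0},\]
and these satisfy the useful identities $\lambda(t)s'(t) = 1$ and $\lambda(t) + \lambda'(t) = 1$. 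Moreover $\lambda(t)$ lies in the compact interval $[\min(1,\lambda_0),\max(1,\lambda_0)]$ and $s(t)-t$ is bounded. Writing $\omega(t) = \lambda(t)\widetilde{\omega}(s(t)) + \sqrt{-1}\partial\bar\partial\varphi(t)$ (possible by the matched cohomology), substituting both flow equations and simplifying via the two identities yields the parabolic Monge--Amp\`ere equation
\[\partial_t\varphi + \varphi = \log\frac{\omega(t)^n}{\widetilde{\omega}(s(t))^n},\]
up to an absorbable time-dependent additive constant.

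Next I would establish $\|\varphi\|_{L^\infty(X\times[0,\infty))}\leqslant C$. At a spatial maximum, $\sqrt{-1}\partial\bar\partial\varphi \leqslant 0$ gives $\omega \leqslant \lambda\widetilde{\omega}(s)$, hence $\log(\omega^n/\widetilde{\omega}(s)^n)\leqslant n\log\lambda(t)\leqslant C$, and the ODE maximum principle applied to $e^t\varphi_{\max}$ produces a uniform upper bound on $\varphi_{\max}$. The symmetric argument at the minimum gives the lower bound. With this in hand I would run the parabolic Schwarz / Aubin--Yau calculation on $\log\operatorname{tr}_{\widetilde{\omega}(s(t))}\omega - A\varphi$ for a large constant $A$. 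The bisectional curvature of $\widetilde{\omega}$ is uniformly bounded by the hypothesis \eqref{2L2As}, so the bad curvature term in the Schwarz inequality is controlled; choosing $A$ large, the ``good'' term $-A\lambda(t)\operatorname{tr}_\omega\widetilde{\omega}(s)$ dominates the remaining traces, and the maximum principle bounds $\operatorname{tr}_{\widetilde{\omega}(s(t))}\omega$. The reverse bound follows from the Monge--Amp\`ere equation combined with the $L^\infty$ bound on $\varphi$. Finally, since $|s(t)-t|$ is uniformly bounded and $|\partial_t\widetilde{\omega}|\leqslant C\widetilde{\omega}$ (a consequence of bounded $\Ric$ of $\widetilde{\omega}$), the metrics $\widetilde{\omega}(s(t))$ and $\widetilde{\omega}(t)$ are uniformly comparable for all $t\geqslant 0$, giving \eqref{2L2eq}.

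The curvature estimate \eqref{2L2eqCurv} then follows from the Calabi-type third-order estimate for the K\"ahler--Ricci flow along the lines of \cite{ysZ20}: uniform metric equivalence with a solution of uniformly bounded curvature bootstraps through the standard $S = |\nabla g|^2$ quantity to a uniform bound on $|\Rm(\omega)|_\omega$. The step I anticipate to be the main obstacle is the parabolic Schwarz calculation, because the reference metric $\widetilde{\omega}(s(t))$ is itself evolving: every $\partial_t$ acting on a quantity built from $\widetilde{\omega}(s(t))$ picks up an $s'(t)$ from the chain rule and contributes terms in $\Ric(\widetilde{\omega}(s))$ and $\widetilde{\omega}(s)$ itself. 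Controlling these error terms uniformly in $t$ is exactly what the explicit formulas for $\lambda(t)$ and $s(t)$ from the first step are designed to accommodate.
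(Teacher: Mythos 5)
Your setup matches the paper's exactly up through the Monge--Amp\`ere equation: the same $\lambda(t)$, the same reparametrised time (your $s$ is the paper's $\tau$), and the same equation $\partial_t u + u = \log\bigl(\omega^n/\widetilde{\omega}(\tau)^n\bigr)$. The gap is that you then launch the full pluripotential program --- $L^\infty$ bound on the potential, parabolic Schwarz lemma against a time-reparametrised moving reference, Calabi third-order estimate --- and you yourself flag the Schwarz step as the main obstacle. The paper avoids all of this with one observation: because $\omega(0)=\lambda_0\widetilde{\omega}(0)$, both $u(0)\equiv 0$ and $\partial_t u\big|_{t=0}=n\log\lambda_0$ are \emph{spatially constant}, so the parabolic Monge--Amp\`ere equation degenerates to an ODE and $u(t)$ remains a function of $t$ alone. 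Equivalently, one checks directly that $\lambda(t)\widetilde{\omega}(\tau(t))$ already solves the normalised K\"ahler--Ricci flow with initial data $\lambda_0\widetilde{\omega}_0$ (using $\lambda'+\lambda=1$, $\lambda\tau'=1$, and scale invariance of $\Ric$), so by uniqueness $\omega(t)=\lambda(t)\widetilde{\omega}(\tau(t))$ on the nose. There is no nontrivial potential to estimate. Then \eqref{2L2eq} reduces to comparing $\widetilde{\omega}(\tau)$ with $\widetilde{\omega}(t)$, which follows by integrating $-C_0\widetilde{\omega}\leqslant\partial_t\widetilde{\omega}+\widetilde{\omega}\leqslant C_0\widetilde{\omega}$ over the interval between $t$ and $\tau$, whose length $|\ln(\lambda(t)/\lambda_0)|$ is uniformly bounded; and \eqref{2L2eqCurv} is immediate from the scaling $|\Rm(\lambda\widetilde{\omega})|_{\lambda\widetilde{\omega}}=\lambda^{-1}|\Rm(\widetilde{\omega})|_{\widetilde{\omega}}$ together with $\lambda(t)$ being pinched between $\min(1,\lambda_0)$ and $\max(1,\lambda_0)$.

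So the missing idea is that the hypothesis $\omega_0=\lambda_0\widetilde{\omega}_0$ is not merely a cohomological compatibility condition to feed into the standard machinery --- it forces the comparison potential to be spatially trivial, and that is the whole content of the lemma. The apparatus you describe is essentially what the paper deploys in Section~3 for the general theorem, where the initial metrics are genuinely unrelated and the potential is a nonconstant function; but note that there the reference is $\omega^-$, a bona fide solution of the flow rather than a time-reparametrised one, precisely to avoid the chain-rule error terms you anticipate.
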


We first reduce the problem to a Monge–Ampère equation. The reduction will differ slightly from what is usually done in the literature. Since the initial metrics are equal up to some scaling factor, we scale the solution in space and time by $\lambda(t)$ and $\tau(t)$ respectively so that 
\begin{equation}\label{2rescale}
    [\omega(t)] = \lambda(t)[\widetilde{\omega}(\tau(t))],
\end{equation}
for all $t \geqslant 0$ and $\omega_0 = \lambda_0 \widetilde{\omega}_0$ where $\lambda_0 >0$. For any solution to the K\"ahler-Ricci Flow, the Cohomology class evolves as 
\begin{equation}\label{2evolcoho}
    [\omega(t)] = e^{-t}[\omega_0] - (1-e^{-t}) c_1(X).  
\end{equation}
Then from \eqref{2rescale}, we have 
$$ \lambda_0 e^{-t} \widetilde{\omega}_0 - (1-e^{-t}) c_1(X) = \lambda(t) e^{-\tau(t)} \widetilde{\omega}_0 - (1-e^{-\tau(t)}) \lambda(t) c_1(X), $$
from which we deduce that 
$$
\left\lbrace 
    \begin{aligned}
        \lambda(t) e^{-\tau(t)}  &= \lambda_0 e^{-t}, \\
        \lambda(t) - e^{-\tau(t)}\lambda(t) &= 1 - e^{-t}. 
    \end{aligned}
\right.
$$
Solving this system yields
\begin{equation}
    \lambda(t) = e^{-t}(\lambda_0 - 1) +1,
\end{equation}
and 
\begin{equation}\label{2tau}
    \tau(t) = t + \ln\left( \frac{\lambda(t)}{\lambda_0} \right).
\end{equation}
Furthermore, we check that at $t=0$, the initial class satisfies $[\omega_0] = \lambda_0 [\widetilde{\omega}_0]$. 

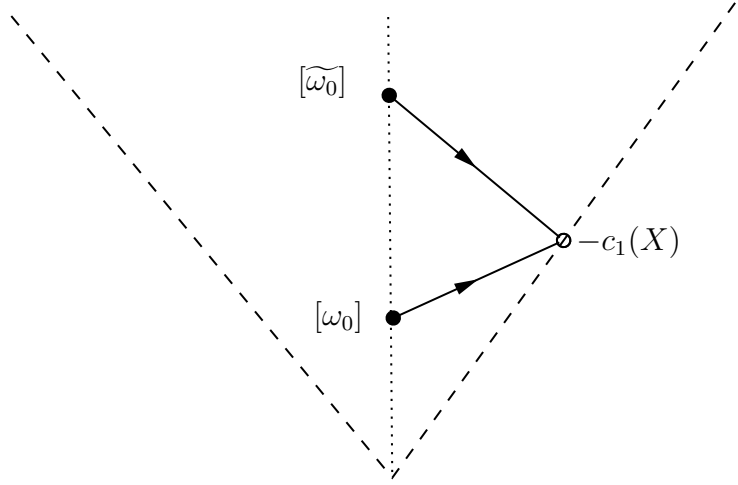
\begin{figure}
    \centering

    \tikzset{every picture/.style={line width=0.75pt}} 
    
    \begin{tikzpicture}[x=0.75pt,y=0.75pt,yscale=-1,xscale=1]
    
    \draw  [dash pattern={on 4.5pt off 4.5pt}] (499.57,44.48) -- (328.51,283.4) -- (137.51,49.96) ;
    \draw    (327,91) -- (412.2,162.49) ;
    \draw [shift={(414,164)}, rotate = 40] [color={rgb, 255:red, 0; green, 0; blue, 0 }  ][line width=0.75]      (0, 0) circle [x radius= 3.35, y radius= 3.35]   ;
    \draw [shift={(370.5,127.5)}, rotate = 220] [fill={rgb, 255:red, 0; green, 0; blue, 0 }  ][line width=0.08]  [draw opacity=0] (12,-3) -- (0,0) -- (12,3) -- cycle    ;
    \draw [shift={(327,91)}, rotate = 40] [color={rgb, 255:red, 0; green, 0; blue, 0 }  ][fill={rgb, 255:red, 0; green, 0; blue, 0 }  ][line width=0.75]      (0, 0) circle [x radius= 3.35, y radius= 3.35]   ;
    \draw    (329,203) -- (411.86,164.98) ;
    \draw [shift={(414,164)}, rotate = 335.35] [color={rgb, 255:red, 0; green, 0; blue, 0 }  ][line width=0.75]      (0, 0) circle [x radius= 3.35, y radius= 3.35]   ;
    \draw [shift={(371.5,183.5)}, rotate = 155.35] [fill={rgb, 255:red, 0; green, 0; blue, 0 }  ][line width=0.08]  [draw opacity=0] (12,-3) -- (0,0) -- (12,3) -- cycle    ;
    \draw [shift={(329,203)}, rotate = 335.35] [color={rgb, 255:red, 0; green, 0; blue, 0 }  ][fill={rgb, 255:red, 0; green, 0; blue, 0 }  ][line width=0.75]      (0, 0) circle [x radius= 3.35, y radius= 3.35]   ;
    \draw  [dash pattern={on 0.84pt off 2.51pt}]  (326.48,51.59) -- (328.51,283.4) ;
    
    \draw (419,155.4) node [anchor=north west][inner sep=0.75pt]    {$-c_{1}( X)$};
    \draw (287.72,193.04) node [anchor=north west][inner sep=0.75pt]    {$[ \omega _{0}]$};
    \draw (279.72,75.04) node [anchor=north west][inner sep=0.75pt]    {$\left[\widetilde{\omega _{0}}\right]$};

    \end{tikzpicture}
    
    \caption{Scaling of Initial Metric}
    \label{fig:1}
\end{figure}

We reduce the flow equation to a Monge–Ampère equation where the rescaled metric takes the role of the reference metric. Using  \eqref{2rescale}, there exists $u:X \rightarrow \RR$ such that 
\begin{equation}\label{2metric}
    \omega(t) = \lambda(t) \widetilde{\omega}(\tau(t)) + \sqrt{-1} \p \bp u(t).
\end{equation}
For ease of notation, we drop the $t$ dependence for $\tau$ and $\lambda$. Taking a $t$-time derivative, 
$$\pdv{\omega}{t}(t) = \lambda'(t) \widetilde{\omega}(\tau) + \lambda \pdv{\tau}{t} \pdv{\widetilde{\omega}}{\tau} + \sqrt{-1} \p \bp  \left( \pdv{u}{t} \right).$$
Using $\lambda' + \lambda = 1$ and $\tau' = \lambda^{-1}$ and \eqref{krf} in $\tau$, the above expression becomes
$$ 
\begin{aligned}
    \pdv{\omega}{t}(t) = \lambda'(t) \widetilde{\omega}(\tau) + \sqrt{-1} \p \bp \log \widetilde{\omega}(\tau)^n - \widetilde{\omega}(\tau) + \sqrt{-1} \p \bp  \left( \pdv{u}{t} \right).
\end{aligned}
$$
On the other hand, \eqref{krf} in $t$ is  
$$ \pdv{\omega}{t}(t) = \Ric(\omega(t)) - \omega(t) = \sqrt{-1}\p \bp \log \omega(t)^n - \lambda \widetilde{\omega}(\tau) - \sqrt{-1} \p \bp u(t). $$
Combining the previous two equations and using $\lambda' + \lambda = 1$ leads to 
$$
\sqrt{-1} \p \bp \left( \pdv{u}{t}\right) = \sqrt{-1} \p \bp \log \omega(t)^n - \sqrt{-1} \p \bp  \log \widetilde{\omega}(\tau)^n = \sqrt{-1} \p \bp u(t). 
$$
Integrating over the compact manifold $X$ yields the complex Monge–Ampère equation 
\begin{equation}\label{2MA}
\left\lbrace 
\begin{aligned}
    \pdv{u}{t} &= \log \left( \frac{\omega(t)^n}{\widetilde{\omega}(\tau)^n} \right) - u(t), \\
    u(0) &= 0.
\end{aligned}\right.
\end{equation}

At $t=0$, we have 
\begin{equation}
    \left.\pdv{u}{t}\right|_{t=0} = n \log(\lambda_0),  
\end{equation}
which is independent of $x \in X$. Since $u(0) \equiv 0$, then the function $u(t)$ is constant over $X$, thus \eqref{2MA} is an ordinary differential equation and \eqref{2metric} is 
\begin{equation}\label{2metequivttau}
    \omega(t) = \lambda(t)\widetilde{\omega}(\tau).
\end{equation}

\begin{proof}[Proof of Lemma \ref{2L}]
    Equation \eqref{2metequivttau} implies that there exist $C>0$ such that  
\begin{equation}\label{2metricequivtaut}
    C^{-1} \widetilde{\omega}(\tau) \leqslant \omega (t) \leqslant C \widetilde{\omega}(\tau). 
\end{equation}
Thus to show \eqref{2L2eq}, we need to show that for some $C>0$
\begin{equation}\label{2tautequiv}
    C^{-1} \widetilde{\omega}(t) \leqslant \widetilde{\omega}(\tau) \leqslant C \widetilde{\omega}(t)
\end{equation}
for all $t \geqslant 0$. To show the above, we use \eqref{2L2As} to find a $C_0>0$ such that 
\begin{equation}
     -C_0 \widetilde{\omega} \leqslant \Ric(\widetilde{\omega}) \leqslant C_0 \widetilde{\omega}.
\end{equation}
This implies that \eqref{krf} can be estimated by 
\begin{equation}\label{2flowest}
    -C_0 \widetilde{\omega} \leqslant \pdv{\widetilde{\omega}}{t} + \widetilde{\omega} \leqslant C_0 \widetilde{\omega}.
\end{equation}
The lower bound in \eqref{2flowest} implies 
$$
    0 \leqslant \pdv{}{t} \left( e^{(C_0+1)t} \widetilde{\omega}\right) 
$$
and the upper bound in \eqref{2flowest} implies 
$$
    \pdv{}{t} \left( e^{(1-C_0)t} \widetilde{\omega}\right) \leqslant 0
$$
for all $t \geqslant 0$. 

We now take cases. Suppose that $0< \lambda_0 < 1$ and assume that $t \geqslant T$ for $T$ large enough such that $\tau  = t + \ln(\lambda(t)/\lambda_0) > t$. Using that $ e^{(1-C_0)t} \widetilde{\omega}(t)$ is decreasing in $t$, we have 
$$
\widetilde{\omega}(\tau) \leqslant e^{(1-C_0)(t-\tau)} \widetilde\omega(t) = e^{C_0-1}\frac{\lambda(t)}{\lambda_0} \widetilde{\omega}(t) \leqslant C \widetilde{\omega}(t),
$$
where $C>0$. On the other hand, $e^{(C_0+1)t} \widetilde{\omega}(t)$ is increasing in $t$ which implies 
$$
\widetilde{\omega}(\tau) \geqslant e^{(C_0+1)(t-\tau)} \widetilde{\omega}(t) = e^{-(C_0+1)}\frac{\lambda(t)}{\lambda_0} \widetilde{\omega}(t) \geqslant C^{-1} \widetilde{\omega}(t)
$$
for some $C>0$. Combining these gives us \eqref{2tautequiv}. If $\lambda_0 > 1$ we can derive the estimate using the same method with $\tau  < t$. 

Combining \eqref{2tautequiv} with \eqref{2metricequivtaut} yields \eqref{2L2eq} for all $t \geqslant T$, for some large $T>0$. Interior estimates imply gives \eqref{2L2eq} for all $t \geqslant 0$. 

The curvature bounds, \eqref{2L2eqCurv}, follow immediately from \eqref{2metequivttau} and the scaling properties of the curvature tensor. 
\end{proof}

\subsection{General Result for Higher Order Estimates}
Using the metric equivalence, we derive higher-order estimates for $\omega(t)$ given that the curvature tensor is bounded for $\widetilde{\omega}$. The method is identical to that by Y. Zhang in \cite{ysZ20}, for convenience we sketch the argument here. When the evolving metric, $\omega(t)$, is equivalent to a fixed metric, $\omega_0$, see Song and Weinkove's notes \cite{SW13Intro}. 

Since we have shown the inequality \eqref{2L2eq}, we consider the two metrics with the same time scale $t$ and drop the dependence of the metrics on $t$ for notational convenience. \\

We first show the following Lemma. 

\begin{lem}
Suppose that $\omega$ and $\widetilde{\omega}$ are solutions to \eqref{krf} and $|\Rm(\widetilde{\omega})|_{\widetilde{\omega}}\leqslant C$. The evolution of the trace satisfies 
        \begin{equation}\label{2traceev}
        \left(\pdv{}{t}-\Delta_{\omega}\right) \tr_{\omega} \widetilde{\omega} \leqslant 
        C \tr_{\omega} \widetilde{\omega}
        +C\left(\tr_{\omega} \widetilde{\omega}\right)^2
        -g^{\bar{j} i}  g^{\bar{q} p}  \widetilde{g}^{\bar{b} a} \nabla_i \widetilde{g}_{p \bar{b}} \nabla_{\bar{j}} \widetilde{g}_{a \bar{q}}.
    \end{equation}
\end{lem}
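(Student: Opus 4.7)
The plan is to directly compute $\partial_t \tr_\omega \widetilde\omega$ and $\Delta_\omega \tr_\omega \widetilde\omega$ pointwise and then subtract them, using the assumed bound $|\Rm(\widetilde\omega)|_{\widetilde\omega}\leqslant C$ to control the resulting curvature pieces. To streamline the bookkeeping, I would work at a fixed point $p$ in coordinates that are normal for $\omega$ at $p$ and that simultaneously diagonalize $\widetilde g$ at $p$ with eigenvalues $\lambda_1,\ldots,\lambda_n$. Since both sides of the asserted inequality are tensorial, this suffices.

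For the time derivative, substituting the normalized flow \eqref{krf} for both $g$ and $\widetilde g$ into $\partial_t(g^{\bar j i}\widetilde g_{i\bar j})$ and using $\partial_t g^{\bar j i} = R^{\bar j i} + g^{\bar j i}$, the $\pm \tr_\omega\widetilde\omega$ contributions coming from the $-\omega$ terms cancel, and I would arrive at
\begin{equation*}
\partial_t \tr_\omega\widetilde\omega = R^{\bar j i}(\omega)\,\widetilde g_{i\bar j} - g^{\bar j i}\widetilde R_{i\bar j}.
\end{equation*}
For the spatial Laplacian, $\Delta_\omega \tr_\omega\widetilde\omega = g^{\bar l k}\partial_{\bar l}\partial_k(g^{\bar j i}\widetilde g_{i\bar j})$, and at $p$ the cross terms involving $\partial g^{\bar j i}$ vanish, so only $(\partial_{\bar l}\partial_k g^{\bar j i})\widetilde g_{i\bar j}$ and $g^{\bar j i}\partial_{\bar l}\partial_k \widetilde g_{i\bar j}$ survive. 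The first piece, via the normal-coordinate identity $R_{k\bar l i\bar j}(\omega) = -\partial_k\partial_{\bar l}g_{i\bar j}$ at $p$, contracts to $R^{\bar j i}(\omega)\,\widetilde g_{i\bar j}$. For the second, the Chern curvature formula
\begin{equation*}
\widetilde R_{k\bar l i\bar j} = -\partial_k\partial_{\bar l}\widetilde g_{i\bar j} + \widetilde g^{\bar n m}\,\partial_k \widetilde g_{i\bar n}\,\partial_{\bar l}\widetilde g_{m\bar j}
\end{equation*}
rewrites $\partial_{\bar l}\partial_k \widetilde g_{i\bar j}$ at $p$ as $-\widetilde R_{k\bar l i\bar j} + \widetilde g^{\bar n m}\nabla_k \widetilde g_{i\bar n}\nabla_{\bar l}\widetilde g_{m\bar j}$, and the K\"ahler symmetry $\nabla_k \widetilde g_{i\bar n} = \nabla_i \widetilde g_{k\bar n}$ reshuffles the gradient piece into the exact form stated in the lemma.

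Subtracting, the $R^{\bar j i}(\omega)\widetilde g_{i\bar j}$ terms drop out (this cancellation is the main reason for working with $\partial_t - \Delta_\omega$), leaving
\begin{equation*}
(\partial_t - \Delta_\omega)\tr_\omega\widetilde\omega = -g^{\bar j i}\widetilde R_{i\bar j} + g^{\bar l k}g^{\bar j i}\widetilde R_{k\bar l i\bar j} - g^{\bar j i}g^{\bar q p}\widetilde g^{\bar b a}\nabla_i \widetilde g_{p\bar b}\nabla_{\bar j}\widetilde g_{a\bar q}.
\end{equation*}
It remains to bound the two curvature terms. The hypothesis $|\Rm(\widetilde\omega)|_{\widetilde\omega}\leqslant C$ gives $|\widetilde R_{i\bar j}|_{\widetilde\omega}\leqslant C$, hence $|g^{\bar j i}\widetilde R_{i\bar j}|\leqslant C\tr_\omega\widetilde\omega$. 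For the quadratic curvature term, the same hypothesis in the joint frame yields the componentwise bound $|\widetilde R_{k\bar l i\bar j}|\leqslant C(\lambda_k\lambda_l\lambda_i\lambda_j)^{1/2}$, from which $|g^{\bar l k}g^{\bar j i}\widetilde R_{k\bar l i\bar j}|\leqslant C\left(\sum_i \lambda_i\right)^{2} = C(\tr_\omega\widetilde\omega)^2$, and the lemma follows. The main subtlety is this last step: the curvature of $\widetilde\omega$ is contracted against the inverse of the \emph{other} metric $\omega$, and the joint-frame calculation (equivalently, a Cauchy--Schwarz comparison of the two metrics) is what converts the $\widetilde\omega$-curvature bound into a clean quadratic bound in $\tr_\omega\widetilde\omega$.
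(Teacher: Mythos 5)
Your computation is correct and follows essentially the same route as the paper: compute $\partial_t \tr_\omega\widetilde\omega$ from the flow, compute $\Delta_\omega\tr_\omega\widetilde\omega$ via the Yau--Lu Schwarz-lemma identity, subtract so the $R^{\bar j i}\widetilde g_{i\bar j}$ terms cancel, and then bound the two remaining $\widetilde\omega$-curvature contractions using $|\Rm(\widetilde\omega)|_{\widetilde\omega}\leqslant C$. The only difference is one of exposition: the paper cites the Laplacian formula and the final curvature bound as standard, whereas you carry out the joint-diagonalization and componentwise estimate $|\widetilde R_{k\bar l i\bar j}|\leqslant C(\lambda_k\lambda_l\lambda_i\lambda_j)^{1/2}$ explicitly, which cleanly justifies the $C(\tr_\omega\widetilde\omega)^2$ term.
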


\begin{proof}
The time derivative is given by 
\begin{equation}
\begin{aligned}
    \pdv{}{t}\left( \tr_{\omega} \widetilde{\omega} \right) &= \pdv{g^{\bar{j}i}}{t}  \widetilde{g}_{i \bar{j}} + g^{\bar{j}i} \pdv{\widetilde{g}_{i \bar{j}}}{t}  \\ 
    &= R^{\bar{j}i}\widetilde{g}_{i \bar{j}} - \tr_{\omega} \Ric(\widetilde{\omega}) 
\end{aligned}
\end{equation}

The laplacian of the trace is given by the standard calculation carried out in \cites{Y78Schawrtz,L67}.

\begin{equation}
     \Delta_{\omega}\tr_{\omega} \widetilde{\omega} = 
     R^{\bar{j}i}\widetilde{g}_{i \bar{j}}  
     - g^{\bar{j} i} g^{\bar{q} p}  \widetilde{R}_{i \bar{j} p \bar{q}} 
     + g^{\bar{j} i} g^{\bar{q} p} \widetilde{g}^{\bar{b} a}  \nabla_i \widetilde{g}_{p \bar{b}} \nabla_{\bar{j}} \widetilde{g}_{a \bar{q}}
\end{equation}
Combining the previous two equations yields 
    \begin{equation}\label{2traceevol}
    \left(\pdv{}{t}-\Delta_{\omega(t)}\right) \tr_{\omega} \widetilde{\omega} =  - \tr_{\omega}\Ric(\widetilde{\omega}) + g^{\bar{j} i} g^{\bar{q} p} \widetilde{R}_{i \bar{j} p \bar{q}}-g^{\bar{j} i} g^{\bar{q} p} \widetilde{g}^{\bar{b} a} \nabla_i \widetilde{g}_{p \bar{b}} \nabla_{\bar{j}} \widetilde{g}_{a \bar{q}}.
    \end{equation}
    The desired inequality follows from assumption \eqref{2L2As}.
\end{proof}

\begin{prop}[[Y. Zhang \cite{ysZ20}]\label{2propcurvbdd}
    Let $\widetilde{\omega}(t)$ be a type III solution to the normalised K\"ahler-Ricci flow and $\omega(t)$ be a arbitrary solution such that for some $C_0>0$,
    \begin{equation}
        C_0^{-1}\widetilde{\omega}(t) \leqslant \omega(t) \leqslant C_0 \widetilde{\omega}(t), 
    \end{equation}
    for all $t \geqslant 0$. Then there exists $C>0$ such that 
    \begin{equation}
        |\Rm(\omega (t))|_{\omega(t)} \leqslant C,
    \end{equation}
    for all $t \geqslant 0$.
\end{prop}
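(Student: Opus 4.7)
The plan is to follow the classical Calabi–Yau–Cheng third order estimate combined with a Perelman–Sesum type parabolic maximum principle argument, in the spirit of \cite{ysZ20} and of the exposition \cite{SW13Intro}. The two-sided equivalence $C_0^{-1}\widetilde{\omega}\le \omega \le C_0 \widetilde{\omega}$ is used freely throughout: it shows that $\tr_{\omega}\widetilde{\omega}$ is already a bounded quantity (no $C^0$ estimate is needed) and it allows one to convert any tensor norm computed in $\omega$ into the analogous one in $\widetilde{\omega}$ up to dimensional constants. Shi-type derivative estimates applied to the Type III solution $\widetilde{\omega}$ supply, for free, uniform bounds on $|\widetilde{\nabla}^{k}\widetilde{\Rm}|_{\widetilde{\omega}}$ for every $k\ge 0$; these will be needed in the evolution equations below.

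The first step is to bound the Calabi third order quantity
\[
S \;:=\; g^{\bar{j}i}g^{\bar{q}p}\widetilde{g}^{\bar{b}a}\,\nabla_i\widetilde{g}_{p\bar{b}}\,\nabla_{\bar{j}}\widetilde{g}_{a\bar{q}},
\]
which appears with a favourable sign already in \eqref{2traceev}. A standard direct calculation yields an evolution inequality of the schematic form
\[
\Big(\pdv{}{t}-\Delta_{\omega}\Big)S \;\le\; C_1(1+S)\;-\;c_1\bigl(|\nabla\nabla\widetilde{g}|^{2}_{\omega}+|\nabla\bar{\nabla}\widetilde{g}|^{2}_{\omega}\bigr),
\]
with constants depending only on $C_0$, $|\Rm(\widetilde{\omega})|_{\widetilde{\omega}}$ and $|\widetilde{\nabla}\widetilde{\Rm}|_{\widetilde{\omega}}$. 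I would then apply the parabolic maximum principle to the test function $Q_1 := S + A\,\tr_{\omega}\widetilde{\omega}$ for a sufficiently large constant $A>0$: the last term of \eqref{2traceev} contributes $-A\cdot S$ to $(\partial_t-\Delta_\omega)Q_1$, which absorbs the $+C_1 S$ coming from the evolution of $S$. Because $\tr_{\omega}\widetilde{\omega}$ is already bounded, evaluation at a space-time maximum of $Q_1$ gives a uniform bound $S\le C_2$.

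With $S$ in hand, the second step is to turn this into a bound on $|\Rm(\omega)|^{2}_{\omega}$. The difference of Chern connections $\Psi := \Gamma_{\omega}-\widetilde{\Gamma}$ is, schematically, $\omega^{-1}\ast \nabla\widetilde{g}$, so $|\Psi|$ is now controlled pointwise, and the comparison identity
\[
\Rm(\omega) \;=\; \widetilde{\Rm}+\widetilde{\nabla}\Psi + \Psi\ast\Psi
\]
reduces matters to controlling $\widetilde{\nabla}\Psi$, which amounts to controlling second covariant derivatives of $\widetilde{g}$ in the metric $\omega$. I would then introduce
\[
Q_2 \;:=\; |\Rm(\omega)|^{2}_{\omega}+B\,S
\]
for large $B>0$ and compute $(\partial_t-\Delta_\omega)Q_2$. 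The standard evolution of $|\Rm(\omega)|^{2}_{\omega}$ along \eqref{krf} yields, apart from the harmless $-|\nabla\Rm|^2$, a cubic bad term $\le C\,|\Rm|^{3}$. Using the comparison identity together with the bound on $S$, this cubic term splits into $|\Rm|^{2}$ multiplied by expressions of the form $|\widetilde{\Rm}|$, $|\Psi|^{2}$, or $|\widetilde{\nabla}\Psi|$, only the last of which is dangerous. This last piece is absorbed, via Cauchy–Schwarz, by the $-B(|\nabla\nabla\widetilde{g}|^{2}+|\nabla\bar{\nabla}\widetilde{g}|^{2})$ produced by $B\cdot S$ in $Q_2$, once $B$ is chosen large enough. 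The maximum principle applied to $Q_2$ then delivers the desired uniform bound.

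The main obstacle, as always in such Shi-type arguments along K\"ahler–Ricci flow, is the cubic-in-curvature term in the evolution of $|\Rm(\omega)|^{2}_{\omega}$. The entire point of the $Q_2$-construction is to manufacture, through $B\cdot S$, enough negative second-derivative-of-$\widetilde{g}$ terms to dominate the part of this cubic term that cannot be controlled directly by the comparison identity plus the pointwise bounds on $|\widetilde{\Rm}|_{\widetilde{\omega}}$ and $|\Psi|$. Once this algebraic bookkeeping is handled carefully, the maximum principle closes the argument and produces the uniform bound $|\Rm(\omega)|_{\omega}\le C$ asserted in the proposition.
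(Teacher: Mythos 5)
Your first step is exactly the paper's: prove a favourable evolution inequality for the trace, get the Calabi third-order quantity $S$ with a negative sign in $(\partial_t-\Delta_\omega)\tr_\omega\widetilde\omega$, bound $|\widetilde\nabla\widetilde\Rm|$ by Shi/Hamilton estimates using Type III, and apply the maximum principle to $S+A\tr_\omega\widetilde\omega$. That part is sound and matches the paper.

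The second step has a genuine gap. You take $Q_2:=|\Rm(\omega)|^2_\omega+BS$ and need to absorb the cubic reaction term $C|\Rm|^3$ coming from the evolution of $|\Rm|^2_\omega$. The negative contribution produced by $BS$ is quadratic in $\Rm$: since $|\bar\nabla\Psi|^2_\omega\ge\tfrac12|\Rm(\omega)|^2_\omega-C$ (from $\bar\nabla\Psi=\widetilde\Rm-\Rm$), the dissipation $-B\bigl(|\nabla\Psi|^2+|\bar\nabla\Psi|^2\bigr)$ gives at best $-\tfrac{B}{2}|\Rm|^2+BC$. No choice of $B$ lets a quadratic negative term dominate a cubic positive term uniformly, so the maximum-principle argument does not close. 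The comparison identity $\Rm=\widetilde\Rm+\widetilde\nabla\Psi+\Psi*\Psi$ does not rescue this: with $S$ and $|\widetilde\Rm|$ bounded, $|\widetilde\nabla\Psi|$ is itself comparable to $|\Rm|$ up to an additive constant, so rewriting $|\Rm|^3\lesssim|\Rm|^2|\widetilde\nabla\Psi|$ and then applying Cauchy--Schwarz against the dissipation only shifts the obstruction to a quartic term $|\Rm|^4$, which is worse.

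The fix, and what the paper actually does, is to work with the first power $|\Rm(\omega)|_\omega$ rather than its square. Its evolution satisfies
\begin{equation*}
\Bigl(\pdv{}{t}-\Delta_\omega\Bigr)|\Rm(\omega)|_\omega\leqslant C|\Rm(\omega)|_\omega^2-\tfrac12|\Rm(\omega)|_\omega,
\end{equation*}
so the bad reaction term is only quadratic. Using the estimate
\begin{equation*}
\Bigl(\pdv{}{t}-\Delta_\omega\Bigr)S\leqslant C-\tfrac12|\Rm(\omega)|_\omega^2,
\end{equation*}
the quantity $Q:=|\Rm(\omega)|_\omega+AS$ with $A$ large enough satisfies $(\partial_t-\Delta_\omega)Q\leqslant-\tfrac12|\Rm(\omega)|_\omega+C$, and the maximum principle closes. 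Replacing $|\Rm|^2$ by $|\Rm|$ in your $Q_2$ (or equivalently, adding $AS$ to $|\Rm|_\omega$ rather than $|\Rm|^2_\omega$) makes the degrees match and completes the argument as in the paper.
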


\begin{proof}

Let $\Psi=\left(\Psi_{i j}^k\right)$ where $\Psi_{i j}^k:=\Gamma_{i j}^k-\widetilde{\Gamma}_{i j}^k$ and $S = |\Psi|_\omega^2$. In local coordinates, we have 
\begin{equation}
S=g^{\bar{j} i} g^{\bar{l} k} g^{\bar{q} p} \widetilde{\nabla}_i g_{k \bar{q}} \overline{\widetilde{\nabla}_j g_{l \bar{p}}} .
\end{equation}
For the last term in \eqref{2traceev}, we have   
\begin{equation}
\begin{aligned}
g^{\bar{j} i} g^{\bar{q} p} \widetilde{g}^{\bar{b} a} \nabla_i \widetilde{g}_{p \bar{b}} \nabla_{\bar{j}} \widetilde{g}_{a \bar{q}} & =g^{\bar{j} i} g^{\bar{q} p} \widetilde{g}^{\bar{b} a}\left(\nabla_i-\widetilde{\nabla}_i\right) \widetilde{g}_{p \bar{b}}\left(\nabla_{\bar{j}}-\widetilde{\nabla}_{\bar{j}}\right) \widetilde{g}_{a \bar{q}} \\
& =g^{\bar{j} i} g^{\bar{q} p} \widetilde{g}^{\bar{b} a}\left(-\Psi_{i p}^d\right) \widetilde{g}_{d \bar{b}}\left(-\overline{\Psi_{j q}^e}\right) \widetilde{g}_{a \bar{e}} \\
& \geqslant C^{-1} S .
\end{aligned}
\end{equation}
Hence, we have the estimate 
\begin{equation}\label{2traceest}
\left(\pdv{}{t}-\Delta_\omega\right) t r_\omega \widetilde{\omega} \leqslant C-C^{-1} S.
\end{equation}
The evolution of the tensor $S$ is given by 
\begin{equation}\label{2S}
\left(\partial_t-\Delta_\omega\right) S=S-|\nabla \Psi|_\omega^2-|\overline{\nabla} \Psi|_\omega^2 + \langle \widetilde{\nabla}_i \widetilde{R}_p^k - \Psi * R m(\widetilde{\omega})- g^{\bar{b} a} \widetilde{\nabla}_a \widetilde{R}_{i \bar{b} p}^k, \Psi \rangle_{\omega},
\end{equation}
where $\langle \cdot , \cdot \rangle_\omega$ is the tensor inner product induced by the metric $g(t)$ associated with $\omega(t)$. Following Hamilton's argument in \cite{H82} (or see \cite{SW13Intro}), the assumption \eqref{2L2As} on the curvature tensor of $\widetilde{\omega}$ implies 
$$\left|\nabla_{\widetilde{\omega}(t)} R m(\widetilde{\omega}(t))\right|_{\widetilde{\omega}(t)} \leqslant C,$$
for some $C>0$. In light of this, we estimate \eqref{2S} by
\begin{equation}\label{2Sev}
\left(\pdv{}{t}-\Delta_\omega\right) S \leqslant C S+C-|\nabla \Psi|_\omega^2-|\overline{\nabla} \Psi|_\omega^2.
\end{equation}
We now have all the pieces to set up a maximum principle argument. Let $Q:=S + A \tr_{\omega} \widetilde{\omega}$ for some sufficiently large $A>0$. Using \eqref{2traceest} and \eqref{2Sev}, there exists  $C>0$ such that 
\begin{equation}
\left(\pdv{}{t}-\Delta_\omega\right)Q \leqslant -S+C .
\end{equation}
By a standard maximum principle argument and noting that  $\tr_{\omega(t)} \widetilde{\omega}(t) \leqslant C$, we find a constant $C \geqslant 1$ such that $S \leqslant C.$ 

The last term in \eqref{2Sev} can be estimated by 
\begin{equation}
|\overline{\nabla} \Psi|_\omega^2=\left|\widetilde{R}_{i \overline{b} p}^k-R_{i \overline{b} p}^k\right|_\omega^2 \geqslant \frac{1}{2}|R m(\omega)|_\omega^2-C,
\end{equation}
thus 
\begin{equation}
\left(\pdv{}{t}-\Delta_\omega\right) S \leqslant C-\frac{1}{2}|R m(\omega)|_\omega^2 .
\end{equation}
The evolution of curvature can be estimated by  
\begin{equation}
\left(\pdv{}{t}-\Delta_\omega\right)|R m(\omega)|_\omega \leqslant C|R m(\omega)|_\omega^2-\frac{1}{2}|R m(\omega)|_\omega.
\end{equation}
Let $Q := |\Rm(\omega)|_{\omega} + A S$ for some large  constant $A>0$, we combine the previous two estimates to obtain 
\begin{equation}
\left(\pdv{}{t}-\Delta_\omega\right)\left(|R m(\omega)|_\omega+A S\right) \leqslant-\frac{1}{2}|R m(\omega)|_\omega+C.
\end{equation}

Then by the maximum principle, there exists $C>0$ such that 
\begin{equation}
\sup _{X \times[0, \infty)}|R m(\omega)|_\omega \leqslant C.
\end{equation}
\end{proof}

\section{Proof of Theorem \ref{T1}}

Using Lemma \ref{2L}, we scale a Type III solution $\widetilde{\omega}(t)$ twice to $\omega^+(t)$ and $\omega^-(t)$, such that $\omega_0^- := \lambda_0^- \widetilde{\omega_0} \leqslant \widetilde{\omega}_0 \leqslant \lambda_0^+ \widetilde{\omega}_0 :=  \omega_0^+$. We then choose $\lambda_0^-$ small enough and $\lambda_0^+$ large enough, such that $\omega_0^{-} \leqslant \omega_0 \leqslant \omega_0^+$. Furthermore, these solutions starting from these scaled metrics are type III. We treat these new solutions act as reference metrics, where either $\omega^+(t)$ or $\omega^-(t)$ is chosen to obtain a favourable sign when carrying out maximum principle arguments.  

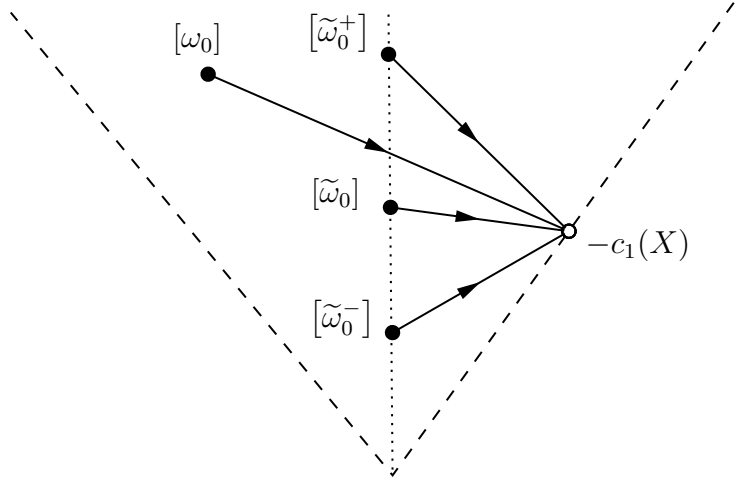
\begin{figure}
    \centering

    \tikzset{every picture/.style={line width=0.75pt}} 
    
    \begin{tikzpicture}[x=0.75pt,y=0.75pt,yscale=-1,xscale=1]
    
    \draw  [dash pattern={on 4.5pt off 4.5pt}] (498.25,45.06) -- (328.03,282.81) -- (137.51,49.96) ;
    \draw    (326,71) -- (414.33,158.35) ;
    \draw [shift={(416,160)}, rotate = 44.68] [color={rgb, 255:red, 0; green, 0; blue, 0 }  ][line width=0.75]      (0, 0) circle [x radius= 3.35, y radius= 3.35]   ;
    \draw [shift={(371,115.5)}, rotate = 224.68] [fill={rgb, 255:red, 0; green, 0; blue, 0 }  ][line width=0.08]  [draw opacity=0] (12,-3) -- (0,0) -- (12,3) -- cycle    ;
    \draw [shift={(326,71)}, rotate = 44.68] [color={rgb, 255:red, 0; green, 0; blue, 0 }  ][fill={rgb, 255:red, 0; green, 0; blue, 0 }  ][line width=0.75]      (0, 0) circle [x radius= 3.35, y radius= 3.35]   ;
    \draw    (328,211) -- (413.97,161.18) ;
    \draw [shift={(416,160)}, rotate = 329.91] [color={rgb, 255:red, 0; green, 0; blue, 0 }  ][line width=0.75]      (0, 0) circle [x radius= 3.35, y radius= 3.35]   ;
    \draw [shift={(372,185.5)}, rotate = 149.91] [fill={rgb, 255:red, 0; green, 0; blue, 0 }  ][line width=0.08]  [draw opacity=0] (12,-3) -- (0,0) -- (12,3) -- cycle    ;
    \draw [shift={(328,211)}, rotate = 329.91] [color={rgb, 255:red, 0; green, 0; blue, 0 }  ][fill={rgb, 255:red, 0; green, 0; blue, 0 }  ][line width=0.75]      (0, 0) circle [x radius= 3.35, y radius= 3.35]   ;
    \draw  [dash pattern={on 0.84pt off 2.51pt}]  (326,51) -- (328.03,282.81) ;
    \draw    (236,81) -- (413.85,159.06) ;
    \draw [shift={(416,160)}, rotate = 23.7] [color={rgb, 255:red, 0; green, 0; blue, 0 }  ][line width=0.75]      (0, 0) circle [x radius= 3.35, y radius= 3.35]   ;
    \draw [shift={(326,120.5)}, rotate = 203.7] [fill={rgb, 255:red, 0; green, 0; blue, 0 }  ][line width=0.08]  [draw opacity=0] (12,-3) -- (0,0) -- (12,3) -- cycle    ;
    \draw [shift={(236,81)}, rotate = 23.7] [color={rgb, 255:red, 0; green, 0; blue, 0 }  ][fill={rgb, 255:red, 0; green, 0; blue, 0 }  ][line width=0.75]      (0, 0) circle [x radius= 3.35, y radius= 3.35]   ;
    \draw    (327,148) -- (413.67,159.69) ;
    \draw [shift={(416,160)}, rotate = 7.68] [color={rgb, 255:red, 0; green, 0; blue, 0 }  ][line width=0.75]      (0, 0) circle [x radius= 3.35, y radius= 3.35]   ;
    \draw [shift={(371.5,154)}, rotate = 187.68] [fill={rgb, 255:red, 0; green, 0; blue, 0 }  ][line width=0.08]  [draw opacity=0] (12,-3) -- (0,0) -- (12,3) -- cycle    ;
    \draw [shift={(327,148)}, rotate = 7.68] [color={rgb, 255:red, 0; green, 0; blue, 0 }  ][fill={rgb, 255:red, 0; green, 0; blue, 0 }  ][line width=0.75]      (0, 0) circle [x radius= 3.35, y radius= 3.35]   ;
    
    \draw (423,157.4) node [anchor=north west][inner sep=0.75pt]    {$-c_{1}( X)$};
    \draw (282.72,49.04) node [anchor=north west][inner sep=0.75pt]    {$\left[\widetilde{\omega }_{0}^{+}\right]$};
    \draw (216.72,54.04) node [anchor=north west][inner sep=0.75pt]    {$[ \omega _{0}]$};
    \draw (284.72,193.04) node [anchor=north west][inner sep=0.75pt]    {$\left[\widetilde{\omega }_{0}^{-}\right]$};
    \draw (285.72,132.04) node [anchor=north west][inner sep=0.75pt]    {$\left[\widetilde{\omega }_{0}\right]$};
      
    \end{tikzpicture}

    \caption{Scaling of Type III solutions}
    \label{fig:2}
\end{figure}

\subsection{Reduction to a Monge–Ampère Equation}\label{S3.1}

Let $\chi \in [\omega_\infty]$ be a representative of the limiting class under the K\"ahler-Ricci flow. Using the usual reference metrics, we have 
\begin{equation}\label{3ref}
    \begin{aligned}
        \omega(t) &= e^{-t} \omega_0 + (1-e^{-t}) \chi + \sqrt{-1} \p \bp \varphi, \\
        \omega^+(t) &= e^{-t} \omega^+_0 + (1-e^{-t}) \chi + \sqrt{-1} \p \bp \varphi^+, \quad \text{ and}\\
        \omega^-(t) &= e^{-t} \omega^-_0 + (1-e^{-t}) \chi + \sqrt{-1} \p \bp \varphi^-. \\
    \end{aligned}
\end{equation}

In our proof of Theorem \ref{T1}, we require three Monge-Ampère equations relating pairs of $\omega(t)$, $\omega^+(t)$ and $\omega^-(t)$. Taking a time derivative of $\omega(t)$ and $\omega^-(t)$ in \eqref{3ref}, and using the flow equation, we arrive at a Monge-Ampère equation for $u:= \varphi - \varphi^-$ given by
\begin{equation}\label{3MAu}
    \pdv{u}{t} = \log\left( \frac{\omega^n}{(\omega^-)^n} \right) - u
\end{equation}
where
\begin{equation}
    \omega(t) = \omega^-(t) + e^{-t}(\omega_0 - \omega^-_0) + \sqrt{-1} \p \bp u.
\end{equation}
Equation \eqref{3MAu} will be the primary Monge-Ampère utilised in the proof of the main Theorem. Note that the construction of $\omega^-_0$ ensures $\omega_* := \omega_0 - \omega^-_0 >0.
$ 
Similarly for $\psi := \varphi^+ - \varphi^-$ we have 
\begin{equation}\label{3MApsi}
    \pdv{\psi}{t} = \log \left( \frac{(\omega^+)^n}{(\omega^-)^n} \right) - \psi.
\end{equation}
The function $\psi$ relates the two evolving metrics by 
\begin{equation}
    \omega^+(t) = \omega^-(t) + e^{-t} (\omega_0^+ - \omega^-_0) + \sqrt{-1} \p \bp \psi. 
\end{equation}

Finally, for $v:= \varphi - \varphi^+$, we have the following Monge-Ampere equation 
\begin{equation}\label{3MAv}
    \pdv{v}{t} = \log\left( \frac{\omega^n}{(\omega^+)^n} \right) - v
\end{equation}
where 
\begin{equation}
    \omega(t) = \omega^+(t) + e^{-t}(\omega_0 - \omega^+_0) + \sqrt{-1} \p \bp v.
\end{equation}

\subsection{Potential Estimates}

Our aim is to derive potential estimates for $u$ and its time derivative. We begin by calculating the evolution of $u$ and $\pdv{u}{t}$. As before, we drop the $t$ dependence on the metrics and potential functions for ease of notation. To denote geometric quantities associated with $\omega^+(t)$ and $\omega^-(t)$ with their respective symbol in superscripts. 

\begin{lem}\label{3lemevol}
    For $\omega_* := \omega_0 - \omega^-_0 >0$, any solution $u$ to \eqref{3MAu} satisfies 
    \begin{equation}\label{3lemev1}
        \left(\pdv{}{t} - \Delta_{\omega(t)}\right)u  = \pdv{u}{t} - n + \tr_{\omega} \omega^- + e^{-t} \tr_\omega \omega_*,
    \end{equation}
    and 
        \begin{equation}\label{3lemev2}
        \left(\pdv{}{t} - \Delta_{\omega(t)}\right)\pdv{u}{t}  = R^- - \tr_{\omega} \Ric(\omega^-) - \pdv{u}{t} - \tr_{\omega} \omega^-  - e^{-t} \tr_\omega \omega_* + n.
    \end{equation}
\end{lem}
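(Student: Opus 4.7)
The plan is a direct computation based on three ingredients: the defining relation
\begin{equation*}
\omega(t) = \omega^-(t) + e^{-t}\omega_* + \sqrt{-1}\,\partial\bar\partial u,
\end{equation*}
the Monge--Amp\`ere equation \eqref{3MAu}, and the K\"ahler--Ricci flow $\partial_t \omega = -\Ric(\omega) - \omega$ together with its counterpart for $\omega^-$. No clever idea is needed; the two identities are extracted from these three facts by tracing with $\omega$ and differentiating in time.

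For the first identity I would take $\tr_\omega$ of the defining relation. Since $\tr_\omega \omega = n$ this gives
\begin{equation*}
\Delta_\omega u = n - \tr_\omega \omega^- - e^{-t}\tr_\omega \omega_*.
\end{equation*}
Subtracting this from $\partial_t u$ yields the claimed formula immediately.

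For the second identity I would differentiate \eqref{3MAu} in time. Working locally so that $\log \omega^n - \log(\omega^-)^n$ is an honest function (a common background volume form cancels), the flow equations give $\partial_t \log \omega^n = \tr_\omega \partial_t \omega = -R - n$ and similarly $\partial_t \log(\omega^-)^n = -R^- - n$, where $R$ and $R^-$ are the scalar curvatures of $\omega$ and $\omega^-$. Therefore
\begin{equation*}
\partial_t \pdv{u}{t} = R^- - R - \pdv{u}{t}.
\end{equation*}
On the Laplacian side, the identity $\Ric(\omega) = -\sqrt{-1}\,\partial\bar\partial \log \det g$ gives
\begin{equation*}
\Delta_\omega \log\!\left(\frac{\omega^n}{(\omega^-)^n}\right) = -R + \tr_\omega \Ric(\omega^-),
\end{equation*}
which, combined with the expression for $\Delta_\omega u$ from the first part of the lemma, produces
\begin{equation*}
\Delta_\omega \pdv{u}{t} = -R + \tr_\omega \Ric(\omega^-) - n + \tr_\omega \omega^- + e^{-t}\tr_\omega \omega_*.
\end{equation*}
Subtracting this from $\partial_t (\partial_t u)$ causes the $R$ terms to cancel and yields the stated evolution.

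There is essentially no conceptual obstacle: the proof is bookkeeping in Ricci/scalar curvature conventions under the flow. The one point that requires care is treating $\log \omega^n$ as $\log \det g_{i\bar j}$ modulo a fixed background volume form, so that both $\partial_t \log \omega^n = -R - n$ and the Laplacian computation producing $\tr_\omega \Ric$ are legitimate, with the background contribution cancelling between $\omega$ and $\omega^-$.
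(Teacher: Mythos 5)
Your proof is correct and follows essentially the same route as the paper: trace the potential relation against $\omega$ for the first identity, and for the second differentiate the Monge--Amp\`ere equation \eqref{3MAu} in time and take its $\omega$-Laplacian, using $\Ric = -\sqrt{-1}\,\partial\bar\partial\log\det g$ and the flow equations, so that the $R$ terms cancel. The only cosmetic difference is that you compute $\partial_t(\partial_t u)$ and $\Delta_\omega(\partial_t u)$ separately while the paper applies the heat operator to \eqref{3MAu} in one step and substitutes \eqref{3lemev1}; the underlying computation is identical.
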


\begin{proof}
    The first equation, \eqref{3lemev1} follows from \eqref{3ref};
    $$ \left(\pdv{}{t} - \Delta_{\omega(t)}\right)u  = \pdv{u}{t} - \tr_{\omega} (\omega - \omega^- - e^{-t}\omega_*) =  \pdv{u}{t} - n + \tr_{\omega} \omega^- + e^{-t} \tr_\omega \omega_*.
    $$
    Equation \eqref{3lemev2} is derived using \eqref{3MAu}; 
    $$
        \begin{aligned}
            \left(\pdv{}{t} - \Delta_{\omega(t)}\right) \pdv{u}{t} &=  \left(\pdv{}{t} - \Delta_{\omega(t)}\right) \log \left( \frac{\omega^n}{{\omega^-}^n}\right) -  \left(\pdv{}{t} - \Delta_{\omega(t)}\right)u \\
             &= R^- - \tr_{\omega} \Ric(\omega^-) -   \pdv{u}{t} + n - \tr_{\omega} \omega^- - e^{-t}
         \tr_{\omega}\omega_* . 
         \end{aligned}
    $$
\end{proof}
\begin{lem}\label{3pot1}
    Let $u$ be a solution to \eqref{3MAu}. Then there exists a uniform $C>0$ such that 
    \begin{equation}
        -C e^{-t} \leqslant u(t) \leqslant C 
    \end{equation}
    for all $t \geqslant 0$.
\end{lem}
\begin{proof}

    The lower bound follows from the maximum principle. Indeed, at a minimum point of $u$, 
    $$\pdv{}{t} \left( e^t u \right) = e^t \log \left(  \frac{(\omega^-(t) + e^{-t}(\omega_0-\omega_0^-)+\sqrt{-1}\p \bp u)^n}{(\omega^-)^n}\right) \geqslant e^t\log(1) = 0.$$
    Integrating yields 
    $$ u(t) \geqslant - Ce^{-t}.$$
    To derive an upper bound for $u$, we first observe that at a maximum for $v$ in  \eqref{3MAv}, we have 
    $$
    \pdv{v_{\max}}{t} \leqslant \log \left( \frac{(\omega_+ + e^{-t}(\omega_0 - \omega_0^+))^n}{(\omega^+)^n} \right) - v_{\max} \leqslant  - v_{\max}.
    $$
    Then by the maximum principle, we deduce that $v \leqslant 0$. On the other hand, we have $C>0$ such that 
    \begin{equation}\label{psibdd}
        |\psi| \leqslant C 
    \end{equation}
    for all $t>0$. Indeed, the initial metrics $\omega^+_0, \omega^-_0$ and $\widetilde{\omega}_0$ are scalar multiples of each other, Lemma \ref{2L} and \eqref{3MApsi} imply 
$$
\pdv{}{t} \left( e^t \psi \right) = e^t \log \left( \frac{(\omega^+)^n}{(\omega^-)^n} \right) \leqslant Ce^{t}
    $$
for some $C>0$. Integrating the above gives us \eqref{psibdd}. It now follows that  
    $$u(t) = \varphi(t) - \varphi^-(t) = v(t) + \psi(t) \leqslant C  $$
where $C>0$ is independent of time.  
\end{proof}

\begin{lem}\label{3pot2}
    Let $u$ be a solution to \eqref{3MAu}. Then there exists $C>0$ such that 
    \begin{equation}
        \left|\pdv{u}{t}\right| \leqslant C,
    \end{equation}
    for all $t \geqslant 0$.
\end{lem}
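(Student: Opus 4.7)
The plan is to bound $\pdv{u}{t}$ via the parabolic maximum principle applied to auxiliary functions of the form $\pdv{u}{t} \pm A u$, making use of the evolution equations from Lemma \ref{3lemevol}, the $C^0$ bound $|u| \leq C$ of Lemma \ref{3pot1}, and the Type III assumption on $\omega^-$ (which supplies $|R^-| \leq C$ and, via $|\Ric(\omega^-)|_{\omega^-} \leq C$, the pointwise estimate $|\tr_\omega \Ric(\omega^-)| \leq C \tr_\omega \omega^-$).

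For the upper bound, I would set $Q = \pdv{u}{t} - A u$ with $A$ chosen large relative to the Ricci constant. Combining \eqref{3lemev1} and \eqref{3lemev2} gives
\begin{equation*}
\left(\pdv{}{t} - \Delta_\omega\right) Q = R^- - \tr_\omega \Ric(\omega^-) - (1+A) \pdv{u}{t} - (1+A) \tr_\omega \omega^- - (1+A) e^{-t} \tr_\omega \omega_* + (1+A) n.
\end{equation*}
At a space-time maximum of $Q$ with $t > 0$, the left-hand side is non-negative. Absorbing $-\tr_\omega \Ric(\omega^-)$ into the coefficient of $\tr_\omega \omega^-$ (legitimate once $A$ is large enough) and discarding the remaining non-positive term $-(1+A) e^{-t} \tr_\omega \omega_* \leq 0$ (which uses the favorable sign $\omega_* > 0$ supplied by $\omega_0 \geq \omega_0^-$) yields $(1+A) \pdv{u}{t} \leq R^- + (1+A) n$. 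This is uniformly bounded by Type III, and combined with $|u| \leq C$ forces $\sup Q \leq C'$, hence $\pdv{u}{t} \leq C''$ throughout.

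For the lower bound, the sign $\omega_* > 0$ now works against us, so I would pass to the companion equation \eqref{3MAv} for $v = \varphi - \varphi^+$, where the sandwich construction $\omega_0 \leq \omega_0^+$ gives $\omega_{**} := \omega_0 - \omega_0^+ \leq 0$—the opposite sign. Applying the same argument to $\widetilde{Q} = -\pdv{v}{t} + A v$ at its maximum (with $\omega^+$ replacing $\omega^-$ in the obvious analogue of Lemma \ref{3lemevol}, and invoking the Type III property of $\omega^+$ via Lemma \ref{2L}), the favorable sign $-e^{-t} \tr_\omega \omega_{**} \geq 0$ together with the same Ricci absorption trick yields $\pdv{v}{t} \geq -C$. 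To transfer the estimate back to $u$, decompose $u = v + \psi$; from \eqref{3MApsi}, the metric equivalence $C^{-1} \omega^- \leq \omega^+ \leq C \omega^-$ of Lemma \ref{2L} (which renders $|\log((\omega^+)^n/(\omega^-)^n)| \leq C$ pointwise), and the bound $|\psi| \leq C$ noted in the proof of Lemma \ref{3pot1}, one directly reads off $|\pdv{\psi}{t}| \leq C$. Therefore $\pdv{u}{t} = \pdv{v}{t} + \pdv{\psi}{t} \geq -C$.

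The main technical obstacle is that the positive trace terms $\tr_\omega \omega^\pm$ appearing in the evolution expressions are not a priori bounded at this stage of the proof, since the metric equivalence between $\omega$ and $\omega^\pm$ is precisely what Theorem \ref{T1} ultimately aims to establish. The resolution hinges on the interplay between the multiplier $A$ (which subsumes the Ricci contribution into a non-positive coefficient of $\tr_\omega \omega^\pm$) and the $\omega^+$/$\omega^-$ sandwich, which furnishes the correct sign of $\omega_0 - \omega_0^\pm$ to handle the $e^{-t}$ perturbation in one direction at a time; without this sandwich, neither bound would close.
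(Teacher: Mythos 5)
Your upper bound argument is essentially the paper's, and it is correct: you take $Q = \pdv{u}{t} - Au$, use $-\tr_\omega \Ric(\omega^-) \leq C_0 \tr_\omega\omega^-$ to absorb the Ricci term into $-(A+1)\tr_\omega\omega^-$ once $A > C_0 - 1$, discard $-(A+1)e^{-t}\tr_\omega\omega_* \leq 0$, and conclude via the maximum principle and the $C^0$ bound on $u$.

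The lower bound, however, has a genuine gap. You propose passing to $v = \varphi - \varphi^+$ and applying ``the same Ricci absorption trick'' to $\widetilde Q = -\pdv{v}{t} + Av$ at its maximum. Write out the evolution: with $\omega_{**} = \omega_0 - \omega_0^+ \leq 0$,
\begin{equation*}
\left(\pdv{}{t}-\Delta_\omega\right)\widetilde Q = -R^+ + \tr_\omega\Ric(\omega^+) + (A+1)\pdv{v}{t} + (A+1)\tr_\omega\omega^+ + (A+1)e^{-t}\tr_\omega\omega_{**} - (A+1)n.
\end{equation*}
At a maximum this is $\geq 0$, so
\begin{equation*}
(A+1)\pdv{v}{t} \geq R^+ - \tr_\omega\Ric(\omega^+) - (A+1)\tr_\omega\omega^+ - (A+1)e^{-t}\tr_\omega\omega_{**} + (A+1)n.
\end{equation*}
The term $e^{-t}\tr_\omega\omega_{**}$ now has the favourable sign as you say, but look at the Ricci piece: $-\tr_\omega\Ric(\omega^+) - (A+1)\tr_\omega\omega^+ \geq -(C_0+A+1)\tr_\omega\omega^+$, which is \emph{unbounded below} since $\tr_\omega\omega^+$ is not yet controlled. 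In the upper bound argument the absorption worked because the two $\tr_\omega\omega^-$ contributions had opposite signs and a large $A$ made the total coefficient nonpositive; here both contributions are negative on the right-hand side, so no choice of $A$ helps. Trying $\widetilde Q = -\pdv{v}{t} - Av$ or working at minima of $\pdv{v}{t}+Av$ runs into the same obstruction or only yields an upper bound that is already known. So the claimed conclusion $\pdv{v}{t}\geq -C$ does not follow.

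The paper closes the lower bound in a fundamentally different way and never leaves $u$ and $\omega^-$. They set $Q = \pdv{u}{t} + Au$ with $A = 2 + C_0$, so that the absorbed coefficient $(A - C_0 - 1)\tr_\omega\omega^-$ is strictly \emph{positive}, and then convert $\tr_\omega\omega^-$ into a quantity controlled by $\pdv{u}{t}$ itself: by AM--GM, $\tr_\omega\omega^- \geq n\left((\omega^-)^n/\omega^n\right)^{1/n}$, and the Monge--Amp\`ere equation \eqref{3MAu} gives $(\omega^-)^n/\omega^n = e^{-(\pdv{u}{t}+u)}$, so one arrives at
\begin{equation*}
\left(\pdv{}{t}-\Delta_\omega\right)Q \geq -C + C\pdv{u}{t} + n\,e^{-\frac{1}{n}\left(\pdv{u}{t}+u\right)}.
\end{equation*}
At a minimum of $Q$ the left side is $\leq 0$; since $u$ is already uniformly bounded by Lemma \ref{3pot1}, the exponential dominates the linear term as $\pdv{u}{t}\to-\infty$, forcing a lower bound. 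This AM--GM plus Monge--Amp\`ere step is the key idea that your proposal is missing; the $\omega^+$ companion equation plays no role in this lemma (it enters only in the $C^0$ bound of Lemma \ref{3pot1}).
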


    \begin{proof}
     We first note that Lemma \ref{2L} implies that for some $C>0$, we have 
\begin{equation}\label{3meq1}
    C^{-1}\widetilde{\omega} (t) \leqslant \omega^- (t)   \leqslant C \widetilde{\omega} (t),
\end{equation}
and 
\begin{equation}\label{3curbound}
    |\Rm(\omega^-(t))|_{\omega^-(t)}^2 \leqslant C,
\end{equation}
for all $t \geqslant 0$. 
Let 
\begin{equation}
    Q:= \pdv{u}{u} - Au,
\end{equation}
where $A>0$ a constant that will be set later. By combining \eqref{3lemev1} and \eqref{3lemev2}, and using \eqref{3curbound}, we obtain 
    $$
    \begin{aligned}
        \left( \pdv{}{t} - \Delta_{\omega} \right) Q &= R^- - \tr_{\omega} \Ric (\omega^-)  -(A+1) \tr_{\omega}  \omega^- - (A+1)e^{-t}\tr_{\omega} \omega_* - (A+1)\pdv{u}{t}+ (A+1)n \\
        & \leqslant -(A+1-C_0) \tr_{\omega}  \omega^- - (A+1)e^{-t}\tr_{\omega} \omega_* - (A+1)\pdv{u}{t}+ (A+1+C_0)n. 
    \end{aligned}
    $$
    We choose $A = 2+C_0$ to obtain the estimate
    $$
    \left( \pdv{}{t} - \Delta_{\omega} \right) Q  \leqslant - C \pdv{u}{t} +C. 
    $$
    Applying the parabolic maximum principle yields the upper bound 
    $$
        \pdv{u}{t} \leqslant C,
    $$
    for some $C>0$. 
    
    For the lower bound, we instead consider the quantity 
    $$Q:= \pdv{u}{t} +Au,$$
    for some $A >0$ to be determined later. Once again, we use \eqref{3lemev1}, \eqref{3lemev2} and \eqref{3curbound} to set up a maximum principle argument: 
        $$
    \begin{aligned}
        \left( \pdv{}{t} - \Delta_{\omega} \right) Q &= R^-(t) - \tr_{\omega} \Ric (\omega^-) + (A-1)  \tr_{\omega}  \widetilde{\omega} +(A-1)e^{-t}\tr_{\omega}\omega_*  + (A-1)\pdv{u}{t}+ n \left( 1 - A \right)\\
        & \geqslant (C_0+1-A)n +(A-1)\pdv{u}{t}+(A -C_0-1)\tr_{\omega}\omega^- +(A-1)e^{-t}\tr_{\omega}\omega_*.
    \end{aligned}
    $$
    Applying a similar argument to before, we choose $A = 2+C_0$ to obtain an estimate 
    $$
    \begin{aligned}
        \left( \pdv{}{t} - \Delta_{\omega} \right) Q & \geqslant -C + C\pdv{u}{t}  + \tr_{\omega}\omega^- \\
        & \geqslant -C + C \pdv{u}{t} + n \left( \frac{(\omega^-)^n}{\omega^n} \right)^\frac{1}{n} \\
        &= -C + C\pdv{u}{t} +n e^{- \frac{1}{n}\left( \pdv{u}{t} + u \right)}
    \end{aligned}
     $$
     Then applying the maximum principle, using that $u$ is uniformly bounded, shows that there exists a $C>0$ independent of time such that 
     \begin{equation}
         \pdv{u}{t} \geqslant -C.
     \end{equation}
\end{proof}

\subsection{Proof of Theorem \ref{T1}}

We now derive a metric equivalence between the Type III solution $\widetilde{\omega}$ and the arbitrary solution $\omega$.

\begin{proof}
From a standard calculation, using \eqref{3curbound}, we have for some $C>0$ and $C_0>0$
\begin{equation}
   \left(\pdv{}{t} - \Delta_{\omega} \right) \log \operatorname{tr}_\omega \omega^- \leqslant C_0 \operatorname{tr}_\omega \omega^- +C,
\end{equation}
for all times $t \geqslant 0$. 
The uniform bound on $\pdv{u}{t}$ from Lemma \ref{3pot2} implies that 
\begin{equation}
\left(\pdv{}{t} - \Delta_{\omega} \right) u = \pdv{u}{t}  -n + \tr_{\omega} \omega^- + e^{-t} \tr_{\omega} \left( \omega_0-\omega_0^- \right) \geqslant -C + \tr_{\omega} \omega^-.
\end{equation}
Following a similar argument as before, we define $Q: = \log \tr_{\omega} \omega^- - Au $, then using the two inequalities, we choose $A$ large enough such that 
\begin{equation}
    \left(\pdv{}{t} - \Delta_{\omega} \right)Q \leqslant C - \tr_{\omega} \omega^-.
\end{equation}
Applying the maximum principle, and again using Lemmas \ref{3pot1} and \ref{3pot2}, we obtain some $C>0$ such that 
\begin{equation}
    \tr_{\omega} \omega^- \leqslant C,
\end{equation}
for all $t \geqslant 0$. Combining the potential estimates from Lemma \ref{3pot1} and \ref{3pot2}, we derive from the Monge–Ampère equation \eqref{3MAu} the volume bounds \begin{equation}
    C^{-1} \leqslant \frac{\omega^n}{(\omega^-)^n}  \leqslant  C , 
\end{equation}
for some $C>0$ for all $t \geqslant 0$. Similar to before, we employ standard eigenvalue estimates to obtain 
$$\tr_{\omega^{-}} \omega \leqslant n \left( \frac{\omega^n}{(\omega^{-})^n}\right) \tr_{\omega} \omega^{-} \leqslant C.  $$
Then the following metric equivalence follows from the two trace bounds; 
\begin{equation}
    C^{-1} \omega^{-}(t) \leqslant \omega(t) \leqslant C \omega^{-}(t). 
\end{equation}
Then the above inequality and \eqref{3meq1} yields \eqref{T1equiv}. To obtain bounds on the curvature tensor, we apply Proposition \ref{2propcurvbdd} with $\omega^{-}(t)$ in place of $\widetilde{\omega}$. This completes the proof of Theorem \ref{T1}.
\end{proof}

We can now apply Theorem \ref{T1} to answer Conjecture \ref{conj}. 

\begin{thm}
    Let $X$ be a K\"ahler manifold with numerically effective canonical line bundle. Then the singularity type of solutions to the K\"ahler-Ricci flow is independent of the initial metric. 
\end{thm}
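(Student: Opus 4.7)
The plan is to deduce this statement immediately from Theorem \ref{T1} by a short dichotomy-plus-symmetry argument; there is essentially no new content to produce. I would first note that on a numerically effective manifold every solution of \eqref{krf} exists for all $t \geqslant 0$, because the evolving class $e^{-t}[\omega_0] - (1-e^{-t})c_1(X)$ stays K\"ahler for all time. Thus each solution is unambiguously either Type III or Type IIb, and the theorem reduces to showing that this dichotomy is invariant under changing the initial metric.

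Fix two arbitrary solutions $\omega(t)$ and $\widetilde{\omega}(t)$ of \eqref{krf}, with initial K\"ahler metrics $\omega_0$ and $\widetilde{\omega}_0$ respectively. First I would suppose $\widetilde{\omega}(t)$ is Type III, so that $\sup_{X \times [0,\infty)}|\Rm(\widetilde{\omega}(t))|_{\widetilde{\omega}(t)} < \infty$. Then Theorem \ref{T1} applies directly and yields a constant $C>0$ with $|\Rm(\omega(t))|^2_{\omega(t)} \leqslant C$ uniformly in $t$, so $\omega(t)$ is also Type III.

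For the converse direction I would use the symmetry of the hypothesis. Theorem \ref{T1} imposes only that the distinguished reference solution has uniformly bounded curvature and is silent on the initial metric of the other, arbitrary solution. So if instead $\omega(t)$ is Type III, swapping the roles of $\omega$ and $\widetilde{\omega}$ and invoking Theorem \ref{T1} again produces a uniform curvature bound for $\widetilde{\omega}(t)$, making $\widetilde{\omega}(t)$ Type III as well. Contrapositively, if one of the two solutions is Type IIb, so is the other.

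There is no genuine obstacle here, since Theorem \ref{T1} already carries all the analytic weight. The only thing worth being careful about is precisely the role-swap in the second step, to confirm that Theorem \ref{T1} is formulated with sufficient generality for it to apply with the labels of the two solutions interchanged; inspecting the statement shows that this is indeed the case, and so the singularity type is a property of the manifold $X$ alone rather than of any particular initial metric.
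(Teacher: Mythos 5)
Your argument is correct and is essentially identical to the paper's own deduction: both reduce the claim to a two-line application of Theorem \ref{T1}, concluding that one Type III solution forces all others to be Type III, and hence (by contraposition or role-swap) a Type IIb solution forces all others to be Type IIb. The only cosmetic difference is that the paper phrases the second half as a contradiction while you phrase it as a symmetry/role-swap; these are the same observation.
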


\begin{proof}
    Suppose a solution $\widetilde{\omega}$ is Type III. Then by Theorem \ref{T1}, any other solution starting from a different initial metric must also be Type III. On the other hand, if a type IIb solution exists, then any other solution must be Type IIb as otherwise, Theorem \ref{T1} implies that all solutions must be Type III, a contradiction. 
\end{proof}

\begin{rem}
    Finally, we note that Theorem \ref{T1} can be interpreted as evidence for the K\"ahler extension of the abundance conjecture. Indeed, if a counter example to Theorem \ref{T1} could be constructed, then Y. Zhang's result \cite{ysZ20} implies that the manifold cannot be semiample.
\end{rem}

\section{Further Remarks}

Our results and techniques have applications and generalisations in many settings.\\

We start with some examples where an explicit solution to the normalised K\"ahler-Ricci Flow is known. This allows us to deduce the singularity type for an arbitrary initial metric for these manifolds. These examples are obtained from \cite{ysZ19}. 

\begin{enumerate}
    \item \textbf{Calabi-Yau Metrics}: Suppose that $X$ admits a Calabi-Yau metric. Then under the K\"ahler-Ricci flow, we check that $\omega(t) = e^{-t} \omega_{CY}$ is a solution with $\omega(0) = \omega_{CY}$. For such solutions, the curvature evolves as 
\begin{equation}
|\Rm(\omega(t))|_{\omega(t)}=e^t\left|\Rm\left(\omega_{C Y}\right)\right|_{\omega_{C Y}},
\end{equation}
and therefore develops a type III singularity if and only if $\omega_{CY}$ is a flat metric, which is only possible if and only if $X$ is a finite quotient of a torus. Therefore, we conclude from Theorem \ref{T1} that the K\"ahler-Ricci Flow develops a type III solution on a numerically effective manifold $X$ if and only if it is a finite quotient of a torus. Furthermore, this demonstrates Item (1) of Theorem \ref{T0} without assuming $K_X$ is semiample. 

\begin{rem}
    Item (2) in Theorem \ref{T0} assumes $K_X$ is big. Since nef and big imply semiample, the generalisation to nef is immediate. 
\end{rem}
\item \textbf{Product Manifold}: 
Consider the product manifold $X:= B \times Y$ where $Y$ is a Calabi-Yau manifold. This is a special case of item (3). Suppose that $\omega$ is a K\"ahler-Einstein metric on $B$, then we check that 
\begin{equation}
    \widetilde{\omega}(t) = e^{-t} \omega_{CY} + (1-e^{-t})\omega_B
\end{equation}
is a solution to the K\"ahler-Ricci flow. The curvature is given by 
\begin{equation}
    |\Rm(\omega(t)|_{\omega(t)}^2 = e^{2t}|\Rm(\omega_{CY})|_{\omega_{CY}}^2 + \frac{1}{(1-e^{-t})^2}|\Rm(\omega_B)|_{\omega_B}^2.
\end{equation}
Thus, we have a type III singularity if and only if $\omega_{CY}$ is flat which occurs if $Y$ is a finite quotient of a torus. Thus, a type III singularity develops if and only if $Y$ is a finite quotient of a torus. 
Thus, using Theorem \ref{T1}, any initial metric $\omega_0$, there exists $C>0$ such that 
\begin{equation}
   C^{-1} \widetilde{\omega}(t) \leqslant \omega (t)\leqslant C \widetilde{\omega}(t)
\end{equation}
and 
\begin{equation}
    |\Rm(\omega(t))|_{\omega(t)} \leqslant C,
\end{equation}
for all $t \geqslant 0$.
\end{enumerate}

Furthermore, our method can be applied to other flows, for example, the modified K\"ahler-Ricci flow as studied in \cites{zZ09mod, yY11, WzZ22}. More precisely, the independence of infinite time singularity type should follow from metric equivalence and higher order estimates.

\bibliography{ref}

@article{T15Kawa,
  title={KAWA lecture notes on the K{\"a}hler-Ricci flow},
  author={Valentino Tosatti},
  journal={Annales de la Facult{\'e} des Sciences de Toulouse},
  year={2015},
  volume={27},
  pages={285-376}
}

@article{SW13Intro,
  title={An introduction to the K{\"a}hler--Ricci flow},
  author={Song, Jian and Weinkove, Ben},
  booktitle={An introduction to the K{\"a}hler-Ricci flow},
  pages={89--188},
  year={2013},
  publisher={Springer}
}

@book{M02,
  title={Introduction to the Mori Program},
  author={Matsuki, K.},
  isbn={9780387984650},
  lccn={00067917},
  series={Universitext},
  url={https://books.google.com.au/books?id=YJZvk5O7HwkC},
  year={2002},
  publisher={Springer New York}
}

@article{ST06,
  title={The K{\"a}hler–Ricci flow on surfaces of positive Kodaira dimension},
  author={Jian Song and Gang Tian},
  journal={Inventiones mathematicae},
  year={2006},
  volume={170},
  pages={609-653}
}

@article{ST12,
  title={Canonical measures and K{\"a}hler-Ricci flow},
  author={Song, Jian and Tian, Gang},
  journal={Journal of the American Mathematical Society},
  volume={25},
  number={2},
  pages={303--353},
  year={2012}
}

@article{ST17,
  title={The K{\"a}hler--Ricci flow through singularities},
  author={Song, Jian and Tian, Gang},
  journal={Inventiones mathematicae},
  volume={207},
  number={2},
  pages={519--595},
  year={2017},
  publisher={Springer}
}

@article{T19,
  title={Some progresses on K{\"a}hler--Ricci flow},
  author={Tian, Gang},
  journal={Bollettino dell'Unione Matematica Italiana},
  volume={12},
  number={1},
  pages={251--263},
  year={2019},
  publisher={Springer}
}

@article{Y78Schawrtz,
  title={A general Schwarz lemma for Kahler manifolds},
  author={Yau, Shing-Tung},
  journal={American Journal of Mathematics},
  volume={100},
  number={1},
  pages={197--203},
  year={1978},
  publisher={JSTOR}
}

@book{L67,
  title={On holomorphic mappings of complex manifolds},
  author={Lu, Yung-chen},
  year={1967},
  publisher={University of California, Berkeley}
}

@article{H82,
  title={Three-manifolds with positive Ricci curvature},
  author={Hamilton, Richard S},
  journal={Journal of Differential geometry},
  volume={17},
  number={2},
  pages={255--306},
  year={1982},
  publisher={Lehigh University}
}

@article{TzZ06,
  title={On the K{\"a}hler-Ricci flow on projective manifolds of general type},
  author={Tian, Gang and Zhang, Zhou},
  journal={Chinese Annals of Mathematics, Series B},
  volume={27},
  number={2},
  pages={179--192},
  year={2006},
  publisher={Springer}
}

@article{zZ10,
  title={Scalar curvature behavior for finite-time singularity of K{\"a}hler-Ricci flow},
  author={Zhang, Zhou},
  journal={Michigan mathematical journal},
  volume={59},
  number={2},
  pages={419--433},
  year={2010},
  publisher={University of Michigan, Department of Mathematics}
}

@article{zZ09,
  title={Scalar curvature bound for K{\"a}hler--Ricci flows over minimal manifolds of general type},
  author={Zhang, Zhou},
  journal={International Mathematics Research Notices},
  volume={2009},
  number={20},
  pages={3901--3912},
  year={2009},
  publisher={OUP}
}

@article{GSW16,
  title={Geometric convergence of the K{\"a}hler--Ricci flow on complex surfaces of general type},
  author={Guo, Bin and Song, Jian and Weinkove, Ben},
  journal={International Mathematics Research Notices},
  volume={2016},
  number={18},
  pages={5652--5669},
  year={2016},
  publisher={Oxford University Press}
}

@techreport{hdC86,
  title={Deformation of Kahler metrics to Kahler-Einstein metrics on compact Kahler manifolds},
  author={Cao, Huai-Dong},
  year={1986},
  institution={Princeton Univ.}
}

@article{ST16,
  title={Bounding scalar curvature for global solutions of the K{\"a}hler-Ricci flow},
  author={Song, Jian and Tian, Gang},
  journal={American Journal of Mathematics},
  volume={138},
  number={3},
  pages={683--695},
  year={2016},
  publisher={Johns Hopkins University Press}
}

@article{ysZ20,
  title={Infinite-time singularity type of the K{\"a}hler--Ricci flow},
  author={Zhang, Yashan},
  journal={The Journal of Geometric Analysis},
  volume={30},
  number={2},
  pages={2092--2104},
  year={2020},
  publisher={Springer}
}

@article{TygZ15,
  title={Infinite-time singularities of the K{\"a}hler--Ricci flow},
  author={Tosatti, Valentino and Zhang, Yuguang},
  journal={Geometry \& Topology},
  volume={19},
  number={5},
  pages={2925--2948},
  year={2015},
  publisher={Mathematical Sciences Publishers}
}

@article{J20,
  title={Convergence of scalar curvature of K{\"a}hler-Ricci flow on manifolds of positive Kodaira dimension},
  author={Jian, Wangjian},
  journal={Advances in Mathematics},
  volume={371},
  pages={107253},
  year={2020},
  publisher={Elsevier}
}

@article{FzZ15,
  title={On the collapsing rate of the K{\"a}hler--Ricci flow with finite-time singularity},
  author={Fong, Frederick Tsz-Ho and Zhang, Zhou},
  journal={The Journal of Geometric Analysis},
  volume={25},
  number={2},
  pages={1098--1107},
  year={2015},
  publisher={Springer}
}

@article{ysZ19,
  title={Collapsing limits of the K{\"a}hler--Ricci flow and the continuity method},
  author={Zhang, Yashan},
  journal={Mathematische Annalen},
  volume={374},
  number={1},
  pages={331--360},
  year={2019},
  publisher={Springer}
}

@article{TWY18,
  title={The K{\"a}hler-Ricci flow, Ricci-flat metrics and collapsing limits},
  author={Tosatti, Valentino and Weinkove, Ben and Yang, Xiaokui},
  journal={American Journal of Mathematics},
  volume={140},
  number={3},
  pages={653--698},
  year={2018},
  publisher={Johns Hopkins University Press}
}

@article{TzlZ16,
  title={Convergence of K{\"a}hler--Ricci flow on lower-dimensional algebraic manifolds of general type},
  author={Tian, Gang and Zhang, Zhenlei},
  journal={International Mathematics Research Notices},
  volume={2016},
  number={21},
  pages={6493--6511},
  year={2016},
  publisher={Oxford University Press}
}

@article{FysZ20,
  title={Local curvature estimates of long-time solutions to the K{\"a}hler-Ricci flow},
  author={Fong, Frederick Tsz-Ho and Zhang, Yashan},
  journal={Advances in Mathematics},
  volume={375},
  pages={107416},
  year={2020},
  publisher={Elsevier}
}

@article{SY12,
  title={Metric flips with Calabi ansatz},
  author={Song, Jian and Yuan, Yuan},
  journal={Geometric and Functional Analysis},
  volume={22},
  number={1},
  pages={240--265},
  year={2012},
  publisher={Springer}
}

@article{SW13,
  title={Contracting exceptional divisors by the K{\"a}hler--Ricci flow},
  author={Song, Jian and Weinkove, Ben},
  year={2013}
}

@article{SW14,
  title={Contracting exceptional divisors by the K{\"a}hler--Ricci flow II},
  author={Song, Jian and Weinkove, Ben},
  journal={Proceedings of the London Mathematical Society},
  volume={108},
  number={6},
  pages={1529--1561},
  year={2014},
  publisher={Oxford University Press}
}

@article{PSS06,
  title={Multiplier Ideal Sheaves and the K$\backslash$" ahler-Ricci Flow},
  author={Phong, Duong H and Sesum, Natasa and Sturm, Jacob},
  journal={arXiv preprint math/0611794},
  year={2006}
}

@article{MS09,
  title={On convergence of the K$\backslash$" ahler-Ricci flow},
  author={Munteanu, Ovidiu and Sz{\'e}kelyhidi, G{\'a}bor},
  journal={arXiv preprint arXiv:0904.3505},
  year={2009}
}

@article{PS06,
  title={On stability and the convergence of the K{\"a}hler-Ricci flow},
  author={Phong, Duong H and Sturm, Jacob},
  journal={Journal of Differential Geometry},
  volume={72},
  number={1},
  pages={149--168},
  year={2006},
  publisher={Lehigh University}
}

@incollection{T12,
  title={Existence of Einstein metrics on Fano manifolds},
  author={Tian, Gang},
  booktitle={Metric and differential geometry},
  pages={119--159},
  year={2012},
  publisher={Springer}
}

@article{yY11,
  title={On the convergence of a modified K{\"a}hler--Ricci flow},
  author={Yuan, Yuan},
  journal={Mathematische Zeitschrift},
  volume={268},
  pages={281--289},
  year={2011},
  publisher={Springer}
}

@article{zZ09mod,
  title={A modified K{\"a}hler--Ricci flow},
  author={Zhang, Zhou},
  journal={Mathematische Annalen},
  volume={345},
  pages={559--579},
  year={2009},
  publisher={Springer}
}

@article{WzZ22,
  title={The modified K\"ahler-Ricci flow, II},
  author={Wu, Haotian and Zhang, Zhou},
  journal={arXiv preprint arXiv:2210.10296},
  year={2022}
}

\end{document}